\newcommand{\plectic}[0]{\text{\textmarried}}
\newcommand{\bb}{\mathbb}
\newcommand{\scr}{\mathscr}
\newcommand{\mrm}{\mathrm}
\newcommand*{\bfcdot}{\scalebox{0.6}{$\bullet$}}
\newcommand{\Z}{\ensuremath{\mathbb{Z}}}
\newcommand{\Q}{\ensuremath{\mathbb{Q}}}
\newcommand{\R}{\ensuremath{\mathbb{R}}}
\newcommand{\C}{\ensuremath{\mathbb{C}}}
\newcommand{\too}{\longrightarrow}								
\newcommand{\n}{\ensuremath{\mathfrak{n}}}
\renewcommand{\det}{\operatorname{det}}
\def\XXint#1#2#3{{\setbox0=\hbox{$#1{#2#3}{\int}$}%
\vcenter{\hbox{$#2#3$}}\kern-.5\wd0}}%
\theoremstyle{plain}
\newtheorem{theorem}{Theorem}[section]
\newtheorem{lemma}[theorem]{Lemma}
\newtheorem{proposition}[theorem]{Proposition}
\newtheorem{corollary}[theorem]{Corollary}
\newtheorem{conjecture}[theorem]{Conjecture}
\newtheorem{thmx}{Theorem}
\theoremstyle{definition}
\newtheorem{remark}[theorem]{Remark}
\newtheorem{definition}[theorem]{Definition}
\def\XXint#1#2#3{{\setbox0=\hbox{$#1{#2#3}{\int}$ }
\vcenter{\hbox{$#2#3$ }}\kern-.585\wd0}}
\begin{document}

\title{Plectic Jacobians}

\author{Michele Fornea}
\email{mfornea.research@gmail.com}
\address{MSRI/SLMath, Berkeley, CA, USA.}

\classification{14C30, 11F41, 11F67, 11G05, 11G40}

\begin{abstract}
Looking for a geometric framework to study plectic Heegner points, we define a collection of abelian varieties -- called \emph{plectic Jacobians} -- using the middle degree cohomology of quaternionic Shimura varieties (QSVs). The construction is inspired by the definition of Griffiths' intermediate Jacobians and rests on Nekov\'a$\check{\text{r}}$--Scholl's notion of plectic Hodge structures. Moreover, we construct exotic Abel--Jacobi maps sending certain zero-cycles on QSVs to plectic Jacobians. 
\end{abstract}

\maketitle

\tableofcontents

\section{Introduction}
For a long time number theorists have been looking for suitable generalizations of Heegner points to tackle the BSD--conjecture for elliptic curves of rank greater than $1$. Motivated by that problem, a conjectural construction of determinants of global points was recently proposed (\cite{PlecticInvariants}, \cite{plecticHeegner}) combining Darmon's pioneering work (\cite{IntegrationDarmon}) with the powerful insights of Nekov\'a$\check{\text{r}}$--Scholl's plectic conjectures (\cite{PlecticNS}). These \emph{plectic Stark--Heegner (PSH) points} are constructed using $p$-adic integration and their peculiar appearance is motivated by the uniformization of QSVs by certain $p$-adic symmetric domains.  
To explain how PSH points should arise  from global points, precise conjectures were formulated (\cite{plecticHeegner}, Conjectures 1.3 \& 1.5) which, in a nutshell, claim that given an elliptic curve of algebraic rank $r$, a PSH point constructed using $r$ different $p$-adic places is in the image of the top exterior power of the Mordell--Weil group via a $p$-adic determinant map. Notably, for elliptic curves of rank $1$ those conjectures recover the expectation that classical Stark--Heegner points are images  of  global points under $p$-adic localization. 

\noindent Those conjectures on PSH points were substantiated by numerical and theoretical evidence. On the computational side, in \cite{PlecticInvariants} they were verified (up to precision) for several elliptic curves of rank $2$ defined over $\bb{Q}(\sqrt{13})$ and $\bb{Q}(\sqrt{37})$.  
On the theoretical side, instances of the conjectures were proved in the setting of polyquadratic CM extensions  (\cite{polyquadraticPlectic}) leveraging higher $p$-adic Gross--Zagier formulas for anticyclotomic $p$-adic $L$-functions (\cite{plecticHeegner}, Theorem A). Moreover, it is reasonable to expect that the recent work of Molina--Hernandez (\cite{hernMOLINA}) will help in clarifying the connection between PSH points and  generalized Kato classes (\cite{DR2}).

\noindent As is the case for Darmon's Stark--Heegner points, one cannot usually guarantee that PSH points arise from global points because of their inherently analytic construction. More than 20 years after the introduction of  Stark--Heegner points, our understanding of their conjectural global properties remains quite unsatisfactory in general. There is a notable exception: for CM extensions Darmon's points recover classical Heegner points, whose global features have long been understood using Jacobian varieties and the theory of complex multiplication. 
One of the appealing traits of PSH points is that they are already interesting and new for CM extensions. Thus, from now on, we will refer to PSH points for CM extensions as \emph{plectic Heegner points}, and we will try to shed some light on their attributes using Nekov\'a$\check{\text{r}}$--Scholl's \emph{plectic Hodge theory}.

\subsection{Main results}
 
 Nekov\'a$\check{\text{r}}$ and Scholl observed (\cite{PlecticNS}, \cite{PlecticMHS}) that Hodge structures of Hilbert modular varieties carry more information than those of general K\"ahler manifolds. In particular, they showed the existence of a K\"unneth-like structure that reflects the canonical demposition of the tangent bundle of Hilbert modular varieties. 
 \begin{definition}
     Let $n\ge1$ be an integer. An $n$-plectic Hodge structure on a finite free $\Z$-module $H$ consists in a decomposition
\[
H\otimes_\Z\C=\bigoplus_{\alpha,\beta\in\bb{Z}^n}H^{\alpha,\beta}\quad\text{such that}\quad H^{\alpha,\beta}=\overline{H^{\beta,\alpha}}.
\]  
 \end{definition}
 \begin{remark}
     There is a natural procedure that produces a Hodge structure from the data of an $n$-plectic Hodge structure. Given $\alpha=(\alpha_j)_{j=1}^n\in\Z^n$ set $\lvert\alpha\rvert=\sum_{j=1}^n\alpha_j$, and let $H$ be an $n$-plectic Hodge structure. The classical Hodge structure arising from $H$ is defined by setting 
     \[
H^{p,q}:=\bigoplus_{\lvert\alpha\rvert=p, \lvert\beta\rvert=q} H^{\alpha,\beta}\qquad \forall\ p,q\in\bb{Z}.
     \]
     In this case we say that the $n$-plectic Hodge structure refines the associated Hodge structure.
 \end{remark}

\noindent Our first main theorem shows that plectic Hodge structures arise in the cohomology of compact rigidified K\"ahler manifolds, i.e., compact complex manifolds endowed with an integrable foliation and a compatible K\"ahler metric (see Definitions \ref{rigidCM}, \ref{def RIG} and Corollary \ref{plectic Hodge structure}). 
\begin{thmx}\label{THM A}
	Let $X$ be an $n$-dimensional compact rigidified K\"ahler manifold. The cohomology of $X$ is endowed with a canonical $n$-plectic Hodge structure refining its classical Hodge structure.
\end{thmx}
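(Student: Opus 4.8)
The plan is to obtain the plectic decomposition by refining Hodge theory: I will use the rigidification to split the sheaf of smooth forms into a multigrading indexed by $\Z^n\times\Z^n$, show that the exterior derivative decomposes compatibly with this grading, and finally establish a plectic analogue of the Kähler identities ensuring that the Hodge Laplacian preserves the finer grading. Concretely, the integrable foliation furnishes a holomorphic splitting of the cotangent bundle $\Omega^1_X=\bigoplus_{j=1}^n\Omega_j$ into line bundles (the conormal directions of the $n$ one-dimensional leaves), and hence a decomposition of the bundle of smooth $k$-forms $\mathcal{A}^k\otimes_\R\C=\bigoplus_{\lvert\alpha\rvert+\lvert\beta\rvert=k}\mathcal{A}^{\alpha,\beta}$, where $\mathcal{A}^{\alpha,\beta}$ collects the forms that use the $j$-th holomorphic direction $\alpha_j$ times and its conjugate $\beta_j$ times. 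Since complex conjugation interchanges a direction with its conjugate, $\overline{\mathcal{A}^{\alpha,\beta}}=\mathcal{A}^{\beta,\alpha}$, which is the symmetry we ultimately need on cohomology.

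First I would record the algebraic consequences of integrability. The hypothesis (Definition \ref{def RIG}) forces the leaves to fit into local product charts, so that $d$ raises exactly one of the $2n$ multidegrees by a single unit; writing $d=\sum_{j=1}^n(\partial_j+\bar\partial_j)$ with $\partial_j\colon\mathcal{A}^{\alpha,\beta}\to\mathcal{A}^{\alpha+e_j,\beta}$ and $\bar\partial_j\colon\mathcal{A}^{\alpha,\beta}\to\mathcal{A}^{\alpha,\beta+e_j}$, the identity $d^2=0$ unwinds into the multicomplex relations $\partial_i\partial_j+\partial_j\partial_i=0$, $\bar\partial_i\bar\partial_j+\bar\partial_j\bar\partial_i=0$ and $\partial_i\bar\partial_j+\bar\partial_j\partial_i=0$ for all $i,j$. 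Summing over $j$ recovers the usual Dolbeault operators $\partial,\bar\partial$, so the $(\alpha,\beta)$-grading refines the classical $(p,q)=(\lvert\alpha\rvert,\lvert\beta\rvert)$ grading; moreover, for fixed holomorphic multidegree $\alpha$ the operator $\bar\partial$ on $\bigoplus_\beta\mathcal{A}^{\alpha,\beta}$ is the Dolbeault resolution of the holomorphic bundle $\Omega^\alpha=\bigotimes_j\Lambda^{\alpha_j}\Omega_j$.

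The crux of the argument, and the step I expect to be the main obstacle, is to show that the Hodge Laplacian $\Delta_d$ preserves the full bidegree $(\alpha,\beta)$, not merely the total degree. With the compatible metric the Kähler form splits as $\omega=\sum_j\omega_j$ with $\omega_j\in\mathcal{A}^{e_j,e_j}$, and the force of the compatibility condition should be precisely that each $\omega_j$ is separately closed, i.e. that the metric is locally a product of Kähler metrics along the leaves. This yields $n$ commuting pairs of Lefschetz operators $(L_j,\Lambda_j)$, and running the Kähler identities leg by leg produces the partial relations $[\Lambda_j,\bar\partial_k]=-i\,\partial_j^*\delta_{jk}$ together with the vanishing of all cross terms $\bar\partial_j\bar\partial_k^*+\bar\partial_k^*\bar\partial_j=0$ and $\partial_j\bar\partial_k^*+\bar\partial_k^*\partial_j=0$ for $j\neq k$. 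Granting these, the off-diagonal contributions to $\Delta_{\bar\partial}=\sum_{j,k}(\bar\partial_j\bar\partial_k^*+\bar\partial_k^*\bar\partial_j)$ cancel, leaving $\Delta_{\bar\partial}=\sum_j\Delta_{\bar\partial_j}$ as a sum of commuting operators each of pure bidegree $(0,0)$; the analogue holds for $\Delta_\partial$, and $\Delta_d=2\Delta_{\bar\partial}$ as usual. The delicate point is to verify that the compatibility hypothesis really delivers these identities: curvature of the normal bundles could a priori obstruct a clean leg-by-leg computation, and it is the local product structure coming from integrability that I expect to neutralize it.

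Finally I would conclude by Hodge theory on the compact manifold $X$. Since $\Delta_d$ is elliptic and preserves the bidegree, the space of harmonic $k$-forms decomposes as $\mathcal{H}^k=\bigoplus_{\lvert\alpha\rvert+\lvert\beta\rvert=k}\mathcal{H}^{\alpha,\beta}$, and the Hodge isomorphism $\HH^k(X,\C)\cong\mathcal{H}^k$ transports this to $\HH^k(X,\C)=\bigoplus_{\alpha,\beta}H^{\alpha,\beta}$. Because the real metric makes $\Delta_d$ commute with complex conjugation and $\overline{\mathcal{A}^{\alpha,\beta}}=\mathcal{A}^{\beta,\alpha}$, we obtain $H^{\alpha,\beta}=\overline{H^{\beta,\alpha}}$, so this is an $n$-plectic Hodge structure, and summing over $\lvert\alpha\rvert=p$, $\lvert\beta\rvert=q$ returns the classical $H^{p,q}$, giving the asserted refinement (Corollary \ref{plectic Hodge structure}). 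For canonicity I would check independence of the chosen compatible metric: since $\bigoplus_{\lvert\beta\rvert=q}\mathcal{H}^{\alpha,\beta}\cong\HH^q(X,\Omega^\alpha)$ is holomorphically defined, Fr\"olicher degeneration in the Kähler case realizes it as a metric-independent subspace of $\HH^{\lvert\alpha\rvert+\lvert\beta\rvert}(X,\C)$, and the intersection formula $H^{\alpha,\beta}=\HH^q(X,\Omega^\alpha)\cap\overline{\HH^p(X,\Omega^\beta)}$ then exhibits every plectic piece as canonically attached to the rigidified Kähler manifold alone.
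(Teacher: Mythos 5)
Your proposal follows the same route as the paper: refine the $(p,q)$-decomposition of forms using the splitting of the tangent bundle induced by the rigidification, prove refined K\"ahler identities so that the cross terms $\bar\partial_j\bar\partial_k^\star+\bar\partial_k^\star\bar\partial_j$ vanish for $j\neq k$ and hence the Laplacian $\Delta_{\mrm{d}}$ preserves the finer bigrading, then conclude by harmonic theory together with a metric-independence check. The only cosmetic differences are that the paper obtains the cross-term vanishing by a direct local computation in the flat model transferred via the distinctive K\"ahler condition (following Griffiths--Harris) rather than via refined Lefschetz commutators, and establishes canonicity by adapting Voisin's harmonic-representative argument rather than through the Dolbeault cohomology of the line-bundle factors.
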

\noindent We note here that the compatibility conditions between the given integrable foliation and K\"ahler metric are singled out to ensure that the Laplacian operator associated to the K\"ahler metric respects the decomposition of harmonic differential forms induced by the foliation.

 \subsubsection{Plectic Jacobians and exotic Abel--Jacobi maps.}
 Our work on PSH points was inspired by Nekov\'a$\check{\text{r}}$ and Scholl's belief that CM points on higher dimensional QSVs could be used to study the arithmetic of elliptic curves of higher rank. 
While previous articles leveraged $p$-adic techniques, this paper begins to develop an Archimedean framework to study plectic Heegner points following Oda's trailblazing work on periods of Hilbert modular varieties (\cite{Oda}, \cite{OdaGeneral}). The aim is to
 understand a form of geometric modularity where Jacobians of Shimura curves are replaced by \emph{plectic Jacobians} of higher dimensional QSVs. As the Jacobian of a curve $C$ can be constructed from the weight $1$ Hodge structure $\mrm{H}^1(C,\Z)$, so plectic Jacobians of a QSV $X$ are defined using the plectic Hodge structure appearing in the middle degree cohomology group $\mrm{H}^{\mrm{dim}X}(X,\Z)$.
 \begin{definition}
An $n$-plectic Hodge structure $H$ is \emph{effective} and has \emph{weight} $\underline{1}=(1,\hspace{-0.4mm}..,1)\in\bb{Z}^n$ if 
\[
H^{\alpha,\beta}\not=0\quad\implies\quad \alpha,\beta\in\bb{N}^n\quad\&\quad\alpha+\beta=\underline{1}.
\]
\end{definition}

An effective $n$-plectic Hodge structure of weight  $\underline{1}\in\bb{Z}^n$ can be thought of as a collection of $n$ effective Hodge structures of weight $1$ on the same underlying module by setting 
\[
F^{1_j}=F^{1_j}(H):=\bigoplus_{\alpha_j\ge 1}H^{\alpha,\beta}\qquad \text{for any } j=1,\dots, n.
\]
It is then natural to make the following definition. 
\begin{definition}
	Let $H$ be an effective $n$-plectic Hodge structure of weight $\underline{1}\in\bb{Z}^{n}$. For any $j=1,\dots,n$ the plectic Jacobian $\mrm{J}_{\plectic}(H,j)$ associated to $H$ is the complex torus defined by 
	\[
	\mrm{J}_{\plectic}(H,j):=H\backslash (H\otimes_\Z\C)/F^{1_j}.
	\]
\end{definition}
Systems of Hecke eigenvalues of modular elliptic curves appear in the cohomology of QSVs only in middle degree, and the cuspidal part of those middle degree cohomology groups can be shown to carry a canonical effective plectic Hodge structure of weight $\underline{1}$ (Lemma \ref{carryPHS}). Therefore, an $r$-dimensional compact quaternionic Shimura variety $X$ determines $r$ plectic Jacobians $\{\mrm{J}_{\plectic}(X,j)\}_{j=1}^r$ which are abelian varieties (Proposition \ref{AVs}) and conjecturally are closely related to modular elliptic curves (Conjectures \ref{plecticODA} $\&$ \ref{plecticPARAM}). 
\begin{remark}
    When the quaternionic Shimura variety $X$ has odd dimension $r$, all middle degree cohomology classes are cuspidal. Thus, the real torus
   \[
   \mrm{H}^r(X,\bb{R})/\mrm{H}^r(X,\bb{Z})
   \]
   can be endowed with several complex structures: those arising from our definitions, and those considered by Weil (\cite{Weil} $\&$ \cite{Lieberman}, Section 3) and Griffiths (\cite{Griffiths}, Section 3). However, while Weil's and Griffiths' definitions of intermediate Jacobians exclude even cohomological degrees, our definition applies unchanged to the middle degree cohomology of even dimensional QSVs.
\end{remark}

 To add details to our discussion, let us consider a totally real number field $F$ of narrow class number one, and a non-split quaternion algebra $B/F$ with $\Sigma:=\{\nu_1,\dots,\nu_r\}$ as set of split Archimedean places. Recall that a quaternionic Shimura variety $X_B$, associated to $B/F$ and a choice of Eichler order, has a canonical model over the reflex field $\bb{Q}(\sum_{j=1}^r\nu_j(x)\lvert x\in F)\subset\bb{C}$. 
 The following conjecture aims at elucidating the relations between the plectic Jacobians of $X_B$. 
 
 \begin{conjecture}\label{ALGplecticJAC}
 There is an abelian variety $\mrm{J}_{\plectic}(X_B)$ defined over $F$ and canonical isomorphisms
 \[
\big(\mrm{J}_{\plectic}(X_B)\otimes_{F,\nu_j}\C\big)^\mrm{an}\cong\mrm{J}_{\plectic}(X_B,j)\qquad\forall\ j=1,\dots,r.
 \]
 \end{conjecture}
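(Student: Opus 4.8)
The plan is to establish the conjecture one Hecke eigensystem at a time, attaching to each a single abelian variety over $F$ and recognizing the plectic Jacobians as its archimedean avatars. First I would diagonalize the Hecke action on the cuspidal part of $\mrm{H}^r(X_B,\Q)$: by Jacquet--Langlands the contributing eigensystems come from Hilbert cuspidal eigenforms $\pi$ over $F$ of parallel weight $2$, with totally real Hecke field $L_\pi$. Since the Hecke operators are defined over the reflex field and commute with the plectic decomposition $\mrm{H}^r(X_B,\C)=\bigoplus_{\alpha,\beta}\mrm{H}^{\alpha,\beta}$, the resulting isotypic splitting is simultaneously compatible with every filtration $F^{1_j}$. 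It therefore suffices to treat one $\pi$-isotypic piece $H_\pi$ and produce an abelian variety over $F$ whose $r$ archimedean base changes recover the tori $H_\pi\backslash(H_\pi\otimes\C)/F^{1_j}$.

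The structural input is that on $H_\pi$ the effective weight-$\underline1$ plectic Hodge structure realizes the plectic K\"unneth decomposition as a tensor product $H_\pi=H_{\pi,1}\otimes\cdots\otimes H_{\pi,r}$ of rank-$2$ weight-$1$ Hodge structures over $L_\pi$, one per split place $\nu_j$, with $F^{1_j}=H_{\pi,j}^{1,0}\otimes\bigotimes_{i\neq j}(H_{\pi,i}\otimes\C)$. The complex structure defining the $j$-th plectic Jacobian thus acts through the $j$-th tensor factor alone, and an elementary computation identifies $H_\pi\backslash(H_\pi\otimes\C)/F^{1_j}$, up to isogeny, with a power $A_j^{2^{r-1}}$ of the weight-$1$ torus $A_j$ attached to $H_{\pi,j}$. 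The conjecture then becomes the assertion that the $A_j$ are the fibres, through the embeddings $\nu_j\colon F\hookrightarrow\C$, of one $\GL_2$-type abelian variety $A_\pi$ over $F$ with $L_\pi\hookrightarrow\End^0(A_\pi)$. Granting this, the $j$-th plectic Jacobian is pinned down by $A_\pi^{(j)}:=A_\pi\otimes_{F,\nu_j}\C$ together with the auxiliary lattice $\bigotimes_{i\neq j}H_{\pi,i}$, and $\mrm{J}_\plectic(X_B)$ is a tensor-induction-type assembly of the $A_\pi$ whose $F$-rationality is the content of the descent discussed below.

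The heart of the matter, and the reason this is a conjecture rather than a theorem, is producing $A_\pi/F$ and identifying its realizations with the transcendental factors $H_{\pi,j}$; this is a plectic incarnation of Oda's period conjecture, and it entangles two difficulties. The first is $F$-rationality and descent: the cohomology of $X_B$ carries only a $\Gal(\overline\Q/E)$-action over the reflex field $E$, under which the rank-$2^r$ representation does not split, so separating the individual rank-$2$ pieces and realizing each over $F$ requires precisely Nekov\'a$\check{\text{r}}$--Scholl's enlarged plectic symmetry, and it subsumes a form of modularity over $F$. The second is the comparison of periods: even with an algebraic $A_\pi$ in hand, matching $\big(A_\pi\otimes_{F,\nu_j}\C\big)^{\mrm{an}}$ with the transcendentally defined torus is an equality between the quaternionic periods entering the plectic Hodge structure and the periods of $A_\pi$, and upgrading it from an isogeny to the canonical isomorphism demanded here forces one to track integral lattices rather than rational Hodge structures.

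I would attack this following Oda and Shimura. The case $r=1$ is classical: the plectic Jacobian is an ordinary Jacobian factor of a quaternionic Shimura curve, its descent to $F$ comes from the theory of canonical models, and the period identity is Shimura's. For general $r$ the aim is to bootstrap from $r=1$ using the multiplicativity of periods under the tensor factorization above, together with the compatibility of the Hecke and Galois actions across the $r$ split places, so that the plectic structure rigidifies the $2^r$-dimensional cohomological data enough to pin down each rank-$2$ factor over $F$. I expect the hardest and still-missing ingredient to be making this rigidification unconditional in families where modularity over $F$ is not yet known, which is exactly where the plectic conjectures become indispensable.
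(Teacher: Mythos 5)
The statement you are addressing is a \emph{conjecture}, and the paper offers no proof of it: the author explicitly records that it is known only for $r=1$ (the Shimura curve case, via the theory of canonical models) and is ``wide open for all $r\ge2$.'' So there is no argument in the paper to compare yours against; the honest assessment is of whether your proposal closes the gap, and it does not. Your ``structural input'' --- that the $\pi$-isotypic piece $H_\pi$ factors as a tensor product $H_{\pi,1}\otimes\cdots\otimes H_{\pi,r}$ of rank-$2$ weight-$1$ Hodge structures over $L_\pi$, one per split place --- is not something you can take for granted: it is precisely the content of the paper's Plectic Oda Conjecture (Conjecture \ref{plecticODA}), itself a refinement of Oda's long-standing open period conjecture. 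Everything downstream of that assumption (the identification of $\mrm{J}_\plectic(X_B,j)$ with a power $A_j^{2^{r-1}}$, the assembly of $\mrm{J}_\plectic(X_B)$ as a tensor induction, the descent to $F$) is conditional on it, and the descent step additionally requires an $F$-rational Galois structure on the individual rank-$2$ factors that the cohomology of $X_B$, which carries only an action of $\Gal(\overline{\Q}/E)$ for the reflex field $E$, does not by itself provide.

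To your credit, you flag both obstructions yourself and correctly identify the $r=1$ base case and the intended bootstrapping strategy; your outline is in fact consistent with the paper's own surrounding discussion (the computation for products of tori in Section \ref{ExTori}, the appearance of $2^{\lvert\Sigma\rvert-1}$ in Conjecture \ref{plecticPARAM}, and the role of the involutions $\mrm{Fr}_\nu$ and the Jacquet--Langlands correspondence in Proposition \ref{AVs}). But a roadmap whose two central steps are themselves open conjectures is not a proof, and you should present it as a reduction of Conjecture \ref{ALGplecticJAC} to the Plectic Oda Conjecture plus a descent statement, not as an argument for the conjecture itself.
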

 \begin{remark}
Conjecture \ref{ALGplecticJAC} is well-known when $r=1$, i.e., whenever $X_B$ is a Shimura curve, while it is wide open for all $r\ge2$.  
 \end{remark}

 \noindent The Griffiths' style definition of plectic Jacobians allows us to define an exotic Abel--Jacobi maps with domain a subgroup of zero-cycles which we now describe.
We begin by recalling the complex uniformization of a QSV. Let us fix an isomorphism $\iota_j\colon B\otimes_{F,\nu_j}\bb{R}\overset{\sim}{\to}\mrm{M}_2(\bb{R})$ for every $\nu_j\in\Sigma$, then, if the level of the Eichler order is large enough, there is a torsion-free arithmetic subgroup $\Gamma\le B^\times/F^\times$ such that 
\[
X_B=\Gamma\backslash\cal{H}_\Sigma
\] 
is a complex manifold where $\Gamma$ acts on the product $\cal{H}_\Sigma=\prod_{\nu_j\in\Sigma}\cal{H}_{\nu_j}$ of Poincar\'e's upper-half planes via M\"obius transformations. For technical reasons (see Equation \eqref{keyvanishing}) our exotic Abel-Jacobi map is only defined for zero-cycles supported at ``generic'' points: 
denoting by $\cal{H}_{\nu_j}^\circ\subseteq \cal{H}_{\nu_j}$ the subset of those points  with trivial stabilizer in $\iota_j(\Gamma)\le \mrm{PGL}_2(\R)$, we can define
\[
\cal{H}^{\circ}_\Sigma:=\prod_{\nu_j\in\Sigma}\cal{H}_{\nu_j}^\circ\qquad\text{and}\qquad X^\circ_B:=\Gamma\backslash \cal{H}^{\circ}_\Sigma.
\]
 Note that this is not a serious restriction for arithmetic applications since the set  $X^\circ_B$ contains all CM points. 
 The free group $\bb{Z}\big[\cal{H}^{\circ}_\Sigma\big]$ of the product $\cal{H}^{\circ}_\Sigma$ is canonically isomorphic to $\otimes_{j=1}^r\mrm{Div}(\cal{H}_{\nu_j}^\circ)$ by mapping generators $[(\tau_1,\dots,\tau_r)]$ to elementary tensors $\otimes_{j=1}^r[\tau_j]$. If we denote by $\mrm{Div}^0(\cal{H}_{\nu_j}^\circ)$ the subgroup of degree-zero elements of $\mrm{Div}(\cal{H}_{\nu_j}^\circ)$, we can define  \emph{plectic zero-cycles} supported on $X^\circ_B$ by setting
\[
\Z_\plectic[X^\circ_B]:=\mrm{Im}\Big(\otimes_{j=1}^r\mrm{Div}^0(\cal{H}_{\nu_j}^\circ)\to\Z[X^\circ_B]\Big).
\]
Following Darmon--Logan (\cite{Darmon-Logan}) we consider the homomorphism 
\[
\int^r\colon\otimes_{j=1}^r\mrm{Div}^0(\cal{H}_{\nu_j}^\circ)\too\mrm{H}^r_\mrm{dR}(X_B)^\vee,\qquad \otimes_{j=1}^r\big([x_j]-[y_j]\big)\mapsto \int_{y_1}^{x_1}\hspace{-3mm}\cdots\hspace{-0.5mm} \int_{y_r}^{x_r}(-)
\]
mapping an elementary tensor to the linear functional computing a series of line integrals. We are now ready to state our second main theorem  which can be interpreted as a first step towards an Archimedean construction of plectic Heegner points. 
\begin{thmx}\label{thmB}
 The homomorphism $\int^r$ induces a well-defined Abel--Jacobi map
\[
\mrm{AJ}^j_\plectic\colon \Z_\plectic[X^\circ]\too \mrm{J}_\plectic(X_B,j)\qquad\forall\ j=1,\dots,r.
\] 
\end{thmx}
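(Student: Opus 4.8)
The plan is to reduce the statement to a period computation for the restriction of $\int^r$ to the Hodge piece $F^{1_j}$, and then to check that this restriction is insensitive to the choices made in representing a plectic zero-cycle.

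First I would identify the target. By Lemma \ref{carryPHS} the relevant part of $\mrm{H}^r(X_B)$ carries an effective $r$-plectic Hodge structure $H$ of weight $\underline 1$, and de Rham theory gives $\mrm{H}^r_\mrm{dR}(X_B)=H\otimes_\Z\C$. Cup product followed by integration over $X_B$ yields a nondegenerate pairing $\langle\cdot,\cdot\rangle$ on $H\otimes_\Z\C$ under which $H^{\alpha,\beta}$ pairs nontrivially only with $H^{\beta,\alpha}$. Since any two summands of $F^{1_j}=\bigoplus_{\alpha_j=1}H^{\alpha,\beta}$ have both $j$-th indices equal to $1$, they cannot be of mutually dual type, so $F^{1_j}$ is isotropic; as $H^{\alpha,\beta}=\overline{H^{\beta,\alpha}}$ it is moreover Lagrangian, and hence $\langle\cdot,\cdot\rangle$ identifies $(H\otimes_\Z\C)/F^{1_j}\cong(F^{1_j})^\vee$ with $H$ sitting inside as a lattice. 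Under this identification, constructing $\mrm{AJ}^j_\plectic$ amounts to attaching to each plectic zero-cycle a linear functional on $F^{1_j}$, well defined modulo the period lattice $H$.

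Next I would make the functional explicit. For an elementary tensor $\otimes_{j}([x_j]-[y_j])$, I would evaluate $\int^r$ on the harmonic representative of a class in $F^{1_j}$ by integrating over the $r$-cube $C=\prod_j[y_j,x_j]\subset\mathcal{H}^\circ_\Sigma$; since each $\mathcal{H}_{\nu_j}$ is contractible, $C$ is determined up to a homotopy fixing its endpoints, so the value is unambiguous. The point to exploit is that for a class $[\omega]\in F^{1_j}$, represented by a form holomorphic in the $j$-th variable, the restriction of $\omega$ to any face of $C$ on which the $j$-th coordinate is constant vanishes, because $\dd z_j$ restricts to zero there; this is the vanishing recorded in \eqref{keyvanishing}.

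The hard part will be the descent to $\Z_\plectic[X^\circ]$ together with the period count. By genericity the diagonal action of $\Gamma$ on $\mathcal{H}^\circ_\Sigma$ is free, so $\Z[X^\circ]$ is free on $\Gamma$-orbits and the kernel $K$ of $\otimes_j\mrm{Div}^0(\mathcal{H}^\circ_{\nu_j})\to\Z[X^\circ]$ is generated by differences of $\Gamma$-translated configurations. For a diagonal translate one has $\int^r(\gamma\xi)=\int^r(\xi)$ exactly, since harmonic forms pull back to $\Gamma$-invariant forms and $\int_{\gamma C}\omega=\int_C\gamma^*\omega$. The remaining ambiguity comes from translating the endpoint coordinates independently; here the vanishing of the previous step discards the $j$-th faces of $C$, so each such move alters $\int^r(\xi)|_{F^{1_j}}$ by the integral over a cube whose image in $X_B$ is closed. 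I expect the main obstacle to be showing that these closed-up cubes glue to \emph{integral} cycles, hence represent classes in $\mrm{H}_r(X_B,\Z)$, i.e.\ periods in $H$, rather than merely real cohomology classes; this is precisely where freeness of the $\Gamma$-action on $\mathcal{H}^\circ_\Sigma$ — the reason for restricting to generic points — is indispensable. Granting this, $\int^r|_{F^{1_j}}$ carries $K$ into $H$ and therefore descends to the asserted map $\mrm{AJ}^j_\plectic\colon\Z_\plectic[X^\circ]\to\mrm{J}_\plectic(X_B,j)$.
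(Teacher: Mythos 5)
Your overall strategy is the paper's: identify $\mrm{J}_\plectic(X_B,j)$ with $\big(F^{1_j}\mrm{H}^r_{\mrm{sp}}(X_B,\C)\big)^\vee$ modulo the period lattice, realize $\int^r$ as iterated integration over a product of paths, and show that the ambiguity in lifting a plectic zero-cycle changes the resulting functional only by a period. The Lagrangian observation about $F^{1_j}$ and the well-definedness of the cube integral on closed forms of pure refined type are fine. But the proof has a genuine gap exactly where you say ``I expect the main obstacle to be\dots Granting this'': the two load-bearing steps are asserted, not proved. First, your description of the kernel $K$ of $\otimes_j\mrm{Div}^0(\cal{H}^\circ_{\nu_j})\to\Z[X^\circ_B]$ as ``generated by differences of $\Gamma$-translated configurations'' is too coarse. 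Dividing out the diagonal $\Gamma$-translates (which indeed change nothing, by $\Gamma$-invariance of harmonic forms) leaves a nontrivial residual kernel, and identifying it is the content of the paper's Proposition \ref{kernel}: a group-homology computation which, by stripping off the degree-zero conditions one place at a time and using that $\mrm{Div}(\cal{H}_\nu^\circ)$ is an \emph{induced} $\Gamma$-module (this, and not any statement about $\mrm{d}z_j$ restricting to faces, is what \eqref{keyvanishing} says --- you have misattributed it), produces connecting isomorphisms showing the residual kernel is exactly $\mrm{H}_r(\Gamma,\Z)$.

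Second, the step you grant --- that the ``closed-up cubes'' represent classes in $\mrm{H}_r(X_B,\Z)$ rather than merely real classes --- is precisely Theorem \ref{compatiblehomology} of the paper, and it is not automatic. The paper proves it by working with the complex of smooth cubical chains: integration gives $\Upsilon\colon C^\infty_r(X_B)\to\mrm{H}^r_{\mrm{dR}}(X_B/\C)^\vee$, de Rham's theorem identifies $\ker\big(\partial_r \text{ on } C^\infty_r(X_B)/\ker\Upsilon\big)$ with $\mrm{H}_r(X_B,\Z)/\mrm{tor}$, and one then matches the exact sequence of Proposition \ref{kernel} with this one via a commutative diagram in which the map $\zeta$ (recording the degree maps at each place) plays the role of the cubical boundary $\partial_r$. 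It is this diagram chase that converts the group-homology class in $\mrm{H}_r(\Gamma,\Z)$ into an integral singular cycle and hence a period. Without carrying out both of these computations your argument establishes only that $\mrm{AJ}^j_\plectic$ is well defined up to an unidentified subgroup of $\big(F^{1_j}\big)^\vee$, which is the whole difficulty of the theorem.
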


   In future work, we plan to perform numerical experiments to understand the feasibility of enlarging the domain of our exotic Abel--Jacobi maps to contain canonically defined zero-cycles supported on CM points.

\begin{acknowledgements}
I am grateful to G. Baldi, A. Doan, H. Esnault, L. Gehrmann, X. Guitart, D. Lilienfeldt, F. Lin, M. Masdeu, G. Oh, O. Rivero, A. Shnidman, J. Stelzig, M. Tamiozzo, and S. Zhang for answering my questions and participating in many enriching conversations. I would like to especially thank L. Gehrmann and M. Tamiozzo for generously sharing their time and for suggesting ways to improve this paper. 
The work on this article began while the author was a Simons Junior Fellow at Columbia University and was supported by the National Science Foundation under Grant No. DMS-1928930 while the author was in residence at the Mathematical Sciences Research Institute in Berkeley, California, during the Spring 2023 semester.
\end{acknowledgements}

\section{Rigidified complex manifolds}
\begin{definition}
	Let $U,V\subseteq\C^n$ be open subsets. We say that a holomorphic function $\phi\colon U\to V$ is \emph{rigid} if  there exist holomorphic functions $\{\phi^j\}_{j=1}^n$ in one variable such that
	\[
	\phi(u_1,\dots,u_n)=(\phi^1(u_1),\dots,\phi^n(u_n)).
	\]
\end{definition}

 Let $X$ be a Hausdorff topological space. An $n$-dimensional chart $(U,\phi)$ in $X$ consists of an open subset $U\subseteq X$ and an homeomorphism $\phi\colon U\to D$ onto an open subset $D\subseteq\C^n$. We say that two charts $(U,\phi), (V,\psi)$ are \emph{compatible} if either $U\cap V=\emptyset$, or the transition function
\[
\phi\circ\psi^{-1}\colon \psi(U\cap V)\to\phi(U\cap V)
\]
and its inverse are both rigid. 
A covering of $X$ consisting of pairwise compatible $n$-dimensional charts is called an $n$-dimensional \emph{rigidified atlas} of $X$. Moreover, two such atlases $\mathscr{A}_1, \mathscr{A}_2$ are called \emph{equivalent} if any two charts $(U,\phi)\in \mathscr{A}_1$ and $(V,\psi)\in \mathscr{A}_2$ are compatible. Finally, an equivalence class of $n$-dimensional rigidified atlases on $X$ is called an $n$-dimensional \emph{rigidified holomorphic structure} on $X$. It contains a maximal atlas which is the union of the atlases in the equivalence class.

\begin{definition}\label{rigidCM}
	An $n$-dimensional \emph{rigidified complex manifold} consists of a Hausdorff space $X$ with a countable basis, equipped with an $n$-dimensional rigidified holomorphic structure.
\end{definition}

\subsubsection{Examples.}  Any open subset $\Omega\subseteq \C^n$ has a natural structure of rigidified complex manifold given by the atlas
\[
\mathscr{A}=\{(U,\mathrm{id}_U)\ \vert\ U\ \text{open subset of}\ \Omega\}.
\]
The class of examples most relevant for our arithmetic applications consists of quotients $\Gamma\backslash\Omega$ of an open subset $\Omega\subseteq\C^n$ by a discrete group $\Gamma$.

\begin{lemma}\label{ExistExamples}
	Let $\Gamma$ be a discrete group acting on a connected open subset $\Omega\subseteq\C^n$. Suppose
	\begin{itemize}
		\item [$\bfcdot$] $\Gamma$ acts smoothly, freely and properly on $\Omega$,
		\item [$\bfcdot$] there exists a homomorphism $\Gamma\to \mrm{GL}_2(\C)^n$, $\gamma\mapsto(\gamma_1,\dots,\gamma_n)$, such that
		\[
		\gamma\cdot(x_1,\dots,x_n)=(\gamma_1(x_1),\dots,\gamma_n(x_n))\qquad \forall\gamma\in\Gamma,
		\]
	\end{itemize}
	then $\Gamma\backslash\Omega$ has a structure of rigidified complex manifold.
\end{lemma}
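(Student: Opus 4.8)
The plan is to realize $\Gamma\backslash\Omega$ as the base of a quotient covering space and to read off a rigidified atlas from local sections of the quotient map $\pi\colon\Omega\to\Gamma\backslash\Omega$. First I would settle the point-set requirements of Definition \ref{rigidCM}. Since $\Gamma$ is discrete and acts smoothly, freely, and properly, the action is properly discontinuous; hence $\pi$ is a covering map, the quotient is Hausdorff (this is where properness is used), and $\pi$ is an open surjection. Because $\Omega\subseteq\C^n$ is second countable and $\pi$ is open, the images of a countable basis of $\Omega$ form a countable basis of $\Gamma\backslash\Omega$. Thus $\Gamma\backslash\Omega$ is a Hausdorff space with a countable basis, and in fact a smooth real manifold of dimension $2n$.

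Next I would produce the charts. For each $x\in\Omega$, proper discontinuity together with freeness yields a connected open neighbourhood $U\ni x$ with $\gamma U\cap U=\emptyset$ for all $\gamma\neq 1$, so that $\pi|_U$ is a homeomorphism onto the open set $\pi(U)$. Composing $(\pi|_U)^{-1}$ with the inclusion $U\hookrightarrow\C^n$ yields a chart $(\pi(U),\phi_U)$, and such charts cover the quotient. The crucial refinement is to choose these charts so that every overlap $\pi(U)\cap\pi(U')$ is connected (or empty): this is possible precisely because $\Gamma\backslash\Omega$ is already a smooth manifold, hence admits a good cover by, say, geodesically convex sets for an auxiliary Riemannian metric, which I would then refine until each member lifts homeomorphically through $\pi$.

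Finally I would verify rigidity of the transition functions. On a nonempty connected overlap, $\phi_U\circ\phi_{U'}^{-1}$ sends a point $v\in U'$ to the unique point of $U$ in its $\Gamma$-orbit; the assignment $v\mapsto\gamma$ takes values in the discrete group $\Gamma$ and is continuous, hence constant on the connected domain, so the transition is the single map $v\mapsto\gamma\cdot v$. By the second hypothesis $\gamma$ acts as $(v_1,\dots,v_n)\mapsto(\gamma_1(v_1),\dots,\gamma_n(v_n))$ with each $\gamma_j\in\GL_2(\C)$ acting by a M\"obius transformation, holomorphic on the domain in question since $\gamma$ preserves $\Omega$ and therefore avoids poles there; this is exactly a rigid map. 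Its inverse is $v\mapsto\gamma^{-1}\cdot v$, which is rigid for the same reason, because $\Gamma\to\GL_2(\C)^n$ is a homomorphism. Hence the charts are pairwise compatible and constitute a rigidified atlas, endowing $\Gamma\backslash\Omega$ with the desired structure.

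The hard part is exactly the connectedness of overlaps flagged above. If a transition were forced to interpolate between different deck transformations on different components of a disconnected overlap, the coordinate-wise pieces $\gamma_j$ and $\gamma'_j$ could disagree on overlapping ranges of the $j$-th variable, so $\phi_U\circ\phi_{U'}^{-1}$ would fail to split as a product of one-variable holomorphic functions and would not be rigid. Selecting an atlas whose overlaps are connected is what guarantees that each transition is governed by a single group element, and this is the step that must be carried out with care rather than taken for granted.
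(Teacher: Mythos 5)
Your proof is correct and follows essentially the same route as the paper: the charts are local sections of the quotient map $\pi$, and the transition functions are single group elements acting coordinatewise by M\"obius transformations, hence rigid. The one point where you go beyond the paper --- insisting on connected chart overlaps so that the locally constant assignment of deck transformations is actually constant on each transition domain --- is a genuine subtlety that the paper's one-line justification (``its transition functions are given by the action of elements of the group, since $\Gamma$ is discrete'') leaves implicit, and your good-cover refinement handles it correctly.
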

\begin{proof}
	Let $\pi:\Omega\to\Gamma\backslash\Omega$ be the quotient map and $\mathscr{A}$ an atlas in the canonical rigidified holomorphic structure of $\Omega$. We define a rigidified atlas $\mathscr{A}_\Gamma$ for  $\Gamma\backslash\Omega$ as the collection of all pairs $(\pi(U),\pi_{\lvert U}^{-1})$ such that  $(U,\mathrm{id}_U)$ belongs to $\mathscr{A}$  and $\pi_{\lvert U}:U\to\pi(U)$ is injective. First, as $\Gamma$ acts smoothly, freely and properly on $\Omega$ the quotient $\Gamma\backslash\Omega$ is a complex manifold. Then,  $\mathscr{A}_\Gamma$ is a rigidified atlas because its transition functions are given by the action of elements of the group, since $\Gamma$ is discrete.
\end{proof}

\begin{remark}
	Lemma \ref{ExistExamples} shows that complex tori and QSVs are natural examples of  rigidified complex manifolds. Moreover, we note here that the notion of rigidified complex manifolds could be generalized to include symplectic and unitary Shimura varieties over totally real number fields. 
\end{remark}

\subsection{Foliations}
In this section we explain why the tangent bundle  of a rigidified complex manifold admits a natural decomposition. For readers interested in the relation between split tangent bundles and product structures of the universal covering space, we refer to the articles \cite{Beauville}, \cite{Druel}.

\noindent Let $X$ be an $n$-dimensional rigidified complex manifold. For any index $j=1,\dots,n$ we define the $j$-th sub-vector bundle $T_X^j$ of the tangent bundle $T_X$ of $X$ as follows. Let $\scr{U}=\{U_k\}_k$ be an open covering of $X$, and set $U_{k,\ell}:=U_k\cap U_\ell$. Then the rank $2$ real vector bundle $T_X^j$ is covered by open sets $\{U_k\times\R^{2}\}_k$ and the transition morphism between 
\[
U_{k,\ell}\times \R^2\subseteq U_k\times\R^2\qquad \text{and}\qquad U_{k,\ell}\times \R^2\subseteq U_\ell\times\R^2
\] 
 is given by $(u,v)\mapsto (u,d\phi_{ik}^j(u)(v))$, where $\phi_{k,\ell}=(\phi_{k,\ell}^1,\dots,\phi_{k,\ell}^n)$ is the rigid transition map between $\phi_k(U_{k,\ell})\subseteq\C^n$ and $\phi_\ell(U_{k,\ell})\subseteq\C^n$, and $d\phi_{k,\ell}^j(u)\colon\R^2\to\R^2$ denotes the Jacobian matrix of $\phi_{k,\ell}^j$ at the point $u$. We note that there is a direct sum decomposition of the tangent bundle
	\[
	T_X=\bigoplus_{j=1}^nT_X^j.
	\]
Since $X$ is a complex manifold, each vector bundle $T_X^j$ is equipped with an almost complex structure $I_j$. Therefore, there is a decomposition of the extension of scalars 
\[
T_X^j\otimes_\R\C=T_X^{1_j,0_j}\oplus T_X^{0_j,1_j}
\]
where $T_X^{1_j,0_j}$ (resp. $T_X^{0_j,1_j}$) is the sub-bundles of $T_X^j\otimes_\R\C$ onto which the involution $I_j$ acts with eigenvalue $i$ (resp. $-i$).
	\begin{remark}
		Let $(z_1,\dots,z_n)$ be local complex coordinates on an open subset $U\subseteq X$ trivializing $T^j_X$, and denote by $\scr{C}^\infty(U)$ the ring of smooth $\C$-valued functions on $U$. If we write $z_j=x_j+iy_j$, then we can explicitly describe smooth section as
		\[
		T_X^j\otimes_\R\C(U)=\scr{C}^\infty(U)\cdot\frac{\partial}{\partial x_j}\oplus\scr{C}^\infty(U)\cdot\frac{\partial}{\partial y_j}.
		\]
Moreover, smooth sections of $T_X^{1_j,0_j}$ and $T_X^{0_j,1_j}$ over $U$ are free of rank one over $\scr{C}^\infty(U)$ with respective basis elements 
		\[
		\frac{\partial}{\partial z_j}:=\frac{1}{2}\left(\frac{\partial}{\partial x_j}-i\frac{\partial}{\partial y_j}\right)\qquad\text{and}\qquad \frac{\partial}{\partial \overline{z}_j}:=\frac{1}{2}\left(\frac{\partial}{\partial x_j}+i\frac{\partial}{\partial y_j}\right).
		\]
	\end{remark}



\subsubsection{Refined types of differential forms.}
The classical Hodge decomposition of differential form on complex manifolds induces a factorization of the exterior differential $\mrm{d}_X$ into a holomorphic and an anti-holomorphic component $\mrm{d}_X=\partial_X+\overline{\partial}_X$. The decomposition of the tangent bundle of rigidified complex manifolds further refines the types of differential forms and a fortiori the factorization of the exterior differential.

\begin{definition}
	Let $X$ be an $n$-dimensional rigidified complex manifold. For $j\in\{1,\dots,n\}$ set 
	\[
	\cal{A}_X^{1_j,0_j}:=\mrm{Hom}_{\C}\Big(T_X^{1_j,0_j}, \C\Big)\qquad\text{and}\qquad \cal{A}_X^{0_j,1_j}:=\mrm{Hom}_{\C}\Big(T_X^{0_j,1_j}, \C\Big).
	\]
	Then, for an ordered pair $(\alpha,\beta)$ of elements in $\{0,1\}^n$, we define the vector bundle of $\C$-valued smooth differential forms of type $(\alpha,\beta)$ by
	\begin{equation}
	\cal{A}_X^{\alpha,\beta}:=\bigotimes_{\alpha_j=1}\cal{A}_X^{1_j,0_j}\otimes\bigotimes_{\beta_j=1} \cal{A}_X^{0_j,1_j}.
	\end{equation}
\end{definition}

\begin{remark}
Exterior powers of smooth differential forms $\cal{A}_X:=\mrm{Hom}_{\C}\big(T_X, \C\big)$ admit  direct sum decomposition of the form
	\[
	\wedge^k\cal{A}_X=\bigoplus_{\lvert \alpha+\beta\rvert=k}\cal{A}_X^{\alpha,\beta}.
	\]
 \end{remark}
 \noindent Let $j\in\{1,\dots,n\}$ and $\alpha,\beta\in\{0,1\}^n$. If $\alpha_j=0$ there is a differential operator $\xi_j\colon\cal{A}_X^{\alpha,\beta}\to \cal{A}_X^{\alpha+1_j,\beta}$ defined by the diagram
\begin{equation}
\xymatrix{
\cal{A}_X^{\alpha,\beta}\ar@{.>}[r]^{\xi_j}\ar@{^{(}->}[d] &\cal{A}_X^{\alpha+1_j,\beta}\\ \wedge^{\lvert \alpha+\beta\rvert}\cal{A}_X\ar[r]^{\mrm{d}_X}&\wedge^{\lvert \alpha+\beta\rvert+1}\cal{A}_X.\ar@{->>}[u]
}\end{equation}
 If we simply set $\xi_j\colon\cal{A}_X^{\alpha,\beta}\to \{0\}$ when $\alpha_j=1$, we can write $\partial_X=\sum_{j=1}^n\xi_j$. On a local chart with coordinates $(z_1,\dots,z_n)$, any differential form $\omega\in \cal{A}_X^{\alpha,\beta}$ can be written as \[
 \omega=f\cdot\mrm{d}z_\alpha\wedge\mrm{d}\overline{z}_\beta,\qquad\text{where}\qquad\mrm{d}z_\alpha=\wedge_{\{j:\hspace{0.5mm}\alpha_j=1\}}\mrm{d}z_j,\quad \mrm{d}\overline{z}_\beta=\wedge_{\{j:\hspace{0.5mm} \beta_j=1\}}\mrm{d}\overline{z}_j,
 \]
 and the differential operator $\xi_j$ is given by the formula
	\begin{equation}
	\xi_j(\omega)=\frac{\partial}{\partial z_j}f\cdot\mrm{d}z_j\wedge\mrm{d}z_\alpha\wedge\mrm{d}\overline{z}_\beta.
	\end{equation}

\section{Refined Hodge decomposition}
To promote the refined decomposition of differential forms into a refinement of the Hodge decomposition of de Rham cohomology, it is necessary to understand when the Laplacian operator associated to a K\"ahler metric respects the refined types of differential forms. The next definition singles out a sufficient condition. 
 
\begin{definition}
We say that a hermitian metric $\mathrm{d}s^2$ on an $n$-dimensional rigidified complex manifold $X$ is \emph{distinctive K\"ahler} if in a neighborhood of every point $x\in X$ there is a holomorphic coordinate system $(z_1,\dots,z_n)$ and a unitary coframe $\varphi_1,\dots,\varphi_n$ for the metric, such that 
\[
\varphi_j=f_j(z_j)\cdot\mathrm{d}z_j\qquad\&\qquad\frac{\partial}{\partial \overline{z}_j}f_j(x)=0\qquad\forall\ j=1,\dots,n.
\]
Therefore,  $\varphi_j$ is a differential form of type $(1_j,0)$ which satisfies $ \frac{\partial}{\partial z_k}f_j\equiv0\equiv \frac{\partial}{\partial \overline{z}_k}f_j$  $\ \ \forall\ k\not=j$.
\end{definition}
\begin{remark}
A distinctive K\"ahler metric is also K\"ahler. Indeed, one of the equivalent conditions for a metric on an $n$-dimensional complex manifold to be K\"ahler is the existence, for any $x\in X$, of a unitary coframe $\varphi_1,\dots,\varphi_n$ in a neighborhood of $x$ such that $\mrm{d}_X\varphi_j(x)=0$ for every $j=1,\dots,n$.
\end{remark}
\begin{definition}\label{def RIG}
	A \emph{rigidified K\"ahler manifold} is a rigidified complex manifold admitting a distinctive K\"ahler metric.
\end{definition}
  \noindent We now show that rigidified K\"ahler manifolds exist in nature. For every $j=1,\dots,n$ consider a connected open subset $\Omega_j\subseteq\C$ admitting a K\"ahler metric $\mathrm{d}s^2_i$. Write $\Omega=\prod_{j=1}^n\Omega_j$ and suppose a discrete group  $\Gamma$ acts  on it satisfying the assumptions of Lemma \ref{ExistExamples}.
\begin{lemma}\label{metricRIG}
	If the $\Gamma$-action preserves the metric $\mathrm{d}s^2=\sum_j\mathrm{d}s^2_j$, then $\Gamma\backslash\Omega$ has a natural structure of rigidified K\"ahler manifold.
\end{lemma}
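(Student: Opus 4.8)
The plan is to obtain the rigidified complex structure from Lemma \ref{ExistExamples}, to descend the product metric along the quotient map, and then to verify the distinctive K\"ahler condition by a purely local computation on $\Omega$ that exploits the one-dimensionality of each factor $\Omega_j$.

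By Lemma \ref{ExistExamples} the quotient $\Gamma\backslash\Omega$ is a rigidified complex manifold, and the projection $\pi\colon\Omega\to\Gamma\backslash\Omega$ is a local biholomorphism whose local inverses are, by construction, charts in the maximal rigidified atlas. Since $\Gamma$ preserves $\mathrm{d}s^2$, this metric descends to a Hermitian metric on $\Gamma\backslash\Omega$ pulling back to $\mathrm{d}s^2$ under $\pi$; in particular $\pi$ is a local isometry carrying the standard coordinates of $\Omega$ to rigidified holomorphic coordinates on the quotient. As the distinctive K\"ahler condition of Definition \ref{def RIG} is local and transfers verbatim along such an isometric chart, it suffices to exhibit, near every point of $\Omega$, a holomorphic coordinate system together with a unitary coframe of the required shape.

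Fix $x=(x_1,\dots,x_n)\in\Omega$ and use the product coordinates $(z_1,\dots,z_n)$, where $z_j$ is the standard coordinate on $\Omega_j\subseteq\C$. Writing $\mathrm{d}s^2_j=h_j(z_j)\,\mathrm{d}z_j\,\mathrm{d}\overline{z}_j$ with $h_j>0$ a function of $z_j$ alone, and setting $\rho_j=\tfrac12\log h_j$, every unitary coframe for $\mathrm{d}s^2_j$ is of the form $\varphi_j=f_j\,\mathrm{d}z_j$ with $|f_j|^2=h_j$, hence $f_j=e^{\rho_j+i\theta_j}$ for some real $\theta_j$ that we take to depend on $z_j$ only. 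Then $f_j$ depends only on $z_j$, so $\frac{\partial}{\partial z_k}f_j\equiv0\equiv\frac{\partial}{\partial\overline{z}_k}f_j$ for $k\neq j$, and $\varphi_1,\dots,\varphi_n$ forms a unitary coframe for the product metric $\mathrm{d}s^2=\sum_j\mathrm{d}s^2_j$ with each $\varphi_j=f_j(z_j)\,\mathrm{d}z_j$ of type $(1_j,0)$, as demanded. To arrange the remaining condition $\frac{\partial}{\partial\overline{z}_j}f_j(x)=0$, note that $\frac{\partial}{\partial\overline{z}_j}f_j=f_j\big(\frac{\partial\rho_j}{\partial\overline{z}_j}+i\frac{\partial\theta_j}{\partial\overline{z}_j}\big)$, so the vanishing at $x$ reads $\frac{\partial\theta_j}{\partial\overline{z}_j}(x_j)=i\,\frac{\partial\rho_j}{\partial\overline{z}_j}(x_j)$; since $\theta_j$ is real this is the complex conjugate of the single condition $\frac{\partial\theta_j}{\partial z_j}(x_j)=-i\,\frac{\partial\rho_j}{\partial z_j}(x_j)$, which is met by a suitable $\R$-linear choice of $\theta_j$. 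Doing this for every $j$ produces the sought distinctive K\"ahler coframe at $x$.

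The main obstacle is precisely this last adjustment: securing $\frac{\partial}{\partial\overline{z}_j}f_j(x)=0$ without destroying the product structure. This is where the hypothesis $\Omega_j\subseteq\C$ is indispensable—in one complex variable the modulus of $f_j$ is fixed by the metric while its phase is free, so the vanishing can be imposed one factor at a time through a harmless phase twist, and the fact that $\mathrm{d}s^2=\sum_j\mathrm{d}s^2_j$ is a product metric ensures that no cross-dependence between the variables is ever introduced.
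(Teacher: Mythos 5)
Your proof is correct and follows essentially the same route as the paper: obtain the rigidified complex structure from Lemma \ref{ExistExamples}, descend the $\Gamma$-invariant product metric, and build the distinctive coframe factor by factor on $\Omega$. The only difference is that you carry out the one-variable phase adjustment $f_j=e^{\rho_j+i\theta_j}$ explicitly, whereas the paper simply invokes the K\"ahler property of each $(\Omega_j,\mathrm{d}s^2_j)$ to produce a coframe with $\mathrm{d}\varphi_j(x_j)=0$; both arguments are fine.
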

\begin{proof}
	Lemma \ref{ExistExamples} shows that $\Gamma\backslash\Omega$ has a structure of rigidified complex manifold. We just have to prove that the metric $\mathrm{d}s^2=\sum_j\mathrm{d}s^2_j$ induces a distinctive K\"ahler metric on the quotient. For any point $x\in \Gamma\backslash\Omega$ we choose a lift $\widetilde{x}=(x_1,\dots,x_n)\in\Omega$ and open neighborhoods $U_j\subseteq\Omega_j$ of $x_j$ such that the projection map $\pi\colon \Omega\to \Gamma\backslash\Omega$ is injective when restricted to $U=\prod_jU_j$. As each $(\Omega_j, \mathrm{d}s^2_j)$ is a K\"ahler manifold, we can suppose there is a holomorphic coordinate $z_j$ on $U_j$ and a unitary coframe $\varphi_j=f_j(z_j)\cdot\mathrm{d}z_j$ for the metric $\mathrm{d}s^2_j$ satisfying $\mrm{d}\varphi_j(x_j)=0$. Then, the collection of all the pull-backs of the $\varphi_j$'s to the product $U$ gives the sought-after unitary coframe.
\end{proof}

\begin{remark}
Lemma \ref{metricRIG} implies that complex tori and QSVs admit natural structures of rigidified K\"ahler manifolds.	
\end{remark}


\subsection{Refined Hodge identities}

Following (\cite{GriffithsHarris}, Ch. 0, Sect. 6), we recall the definitions of adjoint differential operatiors and  apply it to the special case of rigidified K\"ahler manifolds. For a connected, compact, $n$-dimensional rigidified K\"ahler manifold $X$, we fix a distinctive K\"ahler metric $\mrm{d}s^2$ with associated $(1,1)$-form $\omega$ locally given in a unitary coframe by
\[
\omega=\frac{i}{2}\sum_{j=1}^n\varphi_j\wedge\overline{\varphi}_j.
\]
The distinctive K\"ahler metric induces a hermitian metric on the space of differential forms which can be used in combination with the volume form $\omega^n$ to define the inner product
\[
\langle\psi,\eta\rangle:=\int_X\big(\psi(x),\eta(x)\big)\frac{\omega^n}{n!}\qquad\forall\ \psi,\eta\in A^{p,q}(X).
\]
Then, the adjoint  differential operator $\xi_j^\star\colon A^{p,q}(X)\to A^{p-1,q}(X)$ is defined by the formula
\[
\langle\xi_j^\star\psi,\hspace{0.5mm}\eta\rangle=\langle\psi,\hspace{0.5mm}\xi_j\eta\rangle\qquad \forall\ \eta\in A^{p-1,q}(X).
\]


\begin{proposition}\label{refined Hodge identities}
	If $X$ is a rigidified K\"ahler manifold, then
	\[
	\xi_j\cdot\xi_k^\star+\xi_k^\star\cdot\xi_j=0\qquad \forall\ j\not=k.
	\]
\end{proposition}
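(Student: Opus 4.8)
The plan is to follow the strategy used to establish the classical Kähler identities (\cite{GriffithsHarris}, Ch. 0, Sect. 6): the desired relation is an identity of differential operators, so it suffices to verify it pointwise, and at each point $x\in X$ we reduce to an explicit computation in the flat model provided by distinctive Kähler coordinates. Both $\xi_j$ and $\xi_k^\star$ are odd operators---$\xi_j$ raises $\alpha_j$ by one while $\xi_k^\star$ lowers $\alpha_k$ by one---so we expect them to anticommute whenever $j\neq k$.

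First I would record the flat-model computation. Fix $x\in X$ and distinctive Kähler coordinates $(z_1,\dots,z_n)$ with unitary coframe $\varphi_j=f_j(z_j)\,\mrm{d}z_j$. Writing a form of type $(\alpha,\beta)$ as $f\cdot\mrm{d}z_\alpha\wedge\mrm{d}\overline{z}_\beta$, the explicit formula for $\xi_j$ reads $\xi_j=\epsilon_j\circ\frac{\partial}{\partial z_j}$, where $\epsilon_j$ denotes exterior multiplication by $\mrm{d}z_j$ and $\frac{\partial}{\partial z_j}$ acts on the coefficient; note that $\epsilon_j$ automatically annihilates forms with $\alpha_j=1$, matching the convention in the excerpt. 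Integration by parts identifies the adjoint of $\frac{\partial}{\partial z_k}$ with $-\frac{\partial}{\partial\overline{z}_k}$, and the adjoint of $\epsilon_j$ with the contraction $\iota_j$ against the metric dual of $\mrm{d}z_j$; hence, up to the positive normalization fixed by the metric, $\xi_k^\star=-\iota_k\circ\frac{\partial}{\partial\overline{z}_k}$. Since the unitarity of the coframe forces $\langle\mrm{d}z_j,\mrm{d}z_k\rangle=0$ for $j\neq k$, the Clifford relation $\epsilon_j\iota_k+\iota_k\epsilon_j=\langle\mrm{d}z_j,\mrm{d}z_k\rangle=0$ holds, while the mixed partials $\frac{\partial}{\partial z_j}$ and $\frac{\partial}{\partial\overline{z}_k}$ commute. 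Combining these yields $\xi_j\xi_k^\star+\xi_k^\star\xi_j=-(\epsilon_j\iota_k+\iota_k\epsilon_j)\frac{\partial}{\partial z_j}\frac{\partial}{\partial\overline{z}_k}=0$ in the flat model.

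The remaining---and most delicate---point is to justify that this flat computation controls the adjoint $\xi_k^\star$ on a genuine distinctive Kähler manifold. As in the classical case, the subtlety is that $\xi_k^\star$ is defined globally through the $L^2$ inner product, so its local expression a priori involves first derivatives of the metric coefficients. Here the distinctive Kähler hypothesis is precisely what is needed: the conditions $\varphi_j=f_j(z_j)\,\mrm{d}z_j$ and $\frac{\partial}{\partial\overline{z}_j}f_j(x)=0$ force the metric to osculate, to second order at $x$ and separately in each coordinate, the product of flat metrics $\sum_j\frac{i}{2}\mrm{d}z_j\wedge\mrm{d}\overline{z}_j$. Consequently the error terms in the local formula for $\xi_k^\star$ vanish at $x$, and---crucially---the factorized form of the metric prevents any coupling between the $j$-th and $k$-th directions. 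I therefore expect the main work to lie in making this osculation precise: writing the metric in the given coframe, checking that its first-order deviation from the flat product metric is absorbed into terms killed by the conditions on $f_j$ at $x$, and confirming that the adjoint reduces to a nonzero multiple of $\iota_k\circ\frac{\partial}{\partial\overline{z}_k}$ at $x$. Once this is established, the pointwise identity verified in the flat model holds at every $x\in X$, proving the proposition.
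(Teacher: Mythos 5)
Your proposal is correct and follows essentially the same route as the paper: factor $\xi_j$ as (exterior multiplication by $\mrm{d}z_j$) composed with $\partial/\partial z_j$, compute the adjoints in the flat model where the anticommutation follows from $e_j^\star e_k+e_ke_j^\star=0$ and commutativity of the partials, then transfer to $X$ by observing that the distinctive K\"ahler condition kills the first-order error terms at the chosen point while the single-variable dependence of each $f_j$ kills the mixed second-derivative terms identically. The only difference is cosmetic (you phrase the algebraic relation as a Clifford identity and leave the final osculation bookkeeping as a sketch, which is exactly the bookkeeping the paper's proof records).
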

\begin{proof}
	We adapt the proof of the classical Hodge identities (\cite{GriffithsHarris}, page 111) and begin by verifying the claim for $\C^n$ with the Euclidean metric. The idea is to write the differential operators $\xi_j$'s as a composition of simpler operators. For each index $j=1,\dots,n$ we consider the operator $e_j\colon A_c^{a,b}(\C^n)\to A_c^{a+1_j,b}(\C^n)$ 
	on compactly supported forms defined by 
	\[
	e_j(\psi)=\mathrm{d}z_j\wedge\psi.
	\]
	Let $e_j^\star$ 
	denote the adjoint of $e_j$, and 
	note that the operators $e_j,e_j^\star$ 
	are $\mathscr{C}^\infty(\C^n)$-linear. By a direct computation one verifies that 
	\[
	e_j^\star\cdot e_k+e_k\cdot e^\star_j=0\qquad \forall\ j\not=k. 
	\]
	For $j=1,\dots,n$ we also consider the operator $\partial_j$ 
	 on $A^{a,b}_c(\C^n)$ defined by 
	\[
	\partial_j\big(f\cdot \mathrm{d}z_a\wedge\mathrm{d}\overline{z}_b\big)=\frac{\partial}{\partial z_j}f\cdot \mathrm{d}z_a\wedge\mathrm{d}\overline{z}_b.
	\]
	The operators $\partial_j$'s 
	commute with each other, with all $e_k,e^\star_k$'s, and 
	satisfy $\partial_j^\star=-\overline{\partial}_j$, i.e., 
 \[
 \partial_j^\star\big(f\cdot \mathrm{d}z_a\wedge\mathrm{d}\overline{z}_b\big)=-\frac{\partial}{\partial \overline{z}_j}f\cdot \mathrm{d}z_a\wedge\mathrm{d}\overline{z}_b.
 \]
	We can then write
	\[
	\xi_j=\partial_j\cdot e_j,\qquad \xi_j^\star=-\overline{\partial}_j\cdot e^\star_j.
	\]
For $j\not=k$, the straightforward computation
		\[\begin{split}
		\xi_j\cdot\xi_k^\star&=\partial_j\cdot e_j\cdot(-\overline{\partial}_k\cdot e^\star_k)\\
		&=\overline{\partial}_k\cdot \partial_j\cdot e^\star_k\cdot e_j\\
		&=-\xi_k^\star\cdot\xi_j
		\end{split}\]
  proves the proposition for $\bb{C}^n$.
We claim that the computations with the Euclidean metric suffice to the deduce the result for any rigidified K\"ahler manifold $X$. Indeed, recall that by assumption, in a neighborhood of any point $x\in X$, we can find a holomorphic coordinate system $(z_1,\dots,z_n)$ and a unitary coframe $\{\varphi_j=f_j(z_j)\cdot\mathrm{d}z_j\}_{j=1}^n$ for the metric, such that 
\[
\frac{\partial}{\partial z_k}f_j\equiv0\equiv \frac{\partial}{\partial \overline{z}_k}f_j\qquad\&\qquad\frac{\partial}{\partial \overline{z}_j}f_j(x)=0\qquad\forall\ j=1,\dots,n,\ \ \forall\ k\not=j.
\]
In general, the operator $\xi_j$ equals $\partial_j\cdot e_j$ up to terms that only involve the first order derivative of the function $f_j$. Then, as the operators $e_j, e^\star_j$'s are linear with respect to the algebra of $\mathscr{C}^\infty$-functions, we deduce that  $\xi_j\cdot\xi_k^\star+\xi_k^\star\cdot\xi_j$ for $k\not=j$ coincides at $x\in X$ with the zero-operator up to terms that involve first derivatives, products of first derivatives, and mixed second derivatives of the functions $f_j$ and $f_k$. The terms containing the first order derivatives or their products vanish at $x\in X$ because of the usual K\"ahler condition, while the terms containing the mixed partial derivatives vanish identically because each function $f_j$ depends on a single holomorphic coordinate.

\end{proof}

\noindent Recall that given an operator $P$ on differential forms, the associated Laplacian is the degree zero operator given by the formula $\Delta_P=P\cdot P^\star+P^\star\cdot P$.  For a K\"ahler manifold $X$ we set $\Delta_\mrm{d}=\Delta_{\mrm{d}_X}$.
\begin{corollary}\label{respect}
	If $X$ is an $n$-dimensional rigidified K\"ahler manifold, then 
	\[
	\Delta_\mrm{d}=\frac{1}{2}\sum_{j=1}^n\Delta_{\xi_j}.
	\]
	In particular, the Laplacian operator $\Delta_\mrm{d}$ respects differential forms of refined type $(\alpha,\beta)$. 
\end{corollary}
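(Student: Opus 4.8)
The plan is to obtain the displayed identity as a formal consequence of Proposition \ref{refined Hodge identities}, after reducing $\Delta_\mrm d$ to its holomorphic counterpart via classical Kähler theory, and then to read off the ``in particular'' clause directly. Since a distinctive Kähler metric is in particular Kähler (as observed right after its definition), the usual Hodge theory applies to $X$; in particular $\Delta_\mrm d$ is, up to the constant fixed by the present normalization of the unitary coframe and of $\omega=\tfrac{i}{2}\sum_j\varphi_j\wedge\overline\varphi_j$, proportional to the holomorphic Laplacian $\Delta_{\partial_X}=\partial_X\partial_X^\star+\partial_X^\star\partial_X$. This reduces the problem to refining $\Delta_{\partial_X}$.

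First I would expand $\Delta_{\partial_X}$ using the refinement $\partial_X=\sum_{j=1}^n\xi_j$ recorded above. Writing $\partial_X^\star=\sum_k\xi_k^\star$ and multiplying out,
\[
\Delta_{\partial_X}=\sum_{j,k}\big(\xi_j\xi_k^\star+\xi_k^\star\xi_j\big)=\sum_{j=1}^n\big(\xi_j\xi_j^\star+\xi_j^\star\xi_j\big)+\sum_{j\neq k}\big(\xi_j\xi_k^\star+\xi_k^\star\xi_j\big).
\]
The diagonal contribution is exactly $\sum_{j=1}^n\Delta_{\xi_j}$, while each off-diagonal anticommutator vanishes by Proposition \ref{refined Hodge identities}. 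Hence $\Delta_{\partial_X}=\sum_{j=1}^n\Delta_{\xi_j}$, and combining this with the Kähler comparison between $\Delta_\mrm d$ and $\Delta_{\partial_X}$ produces the asserted formula $\Delta_\mrm d=\tfrac12\sum_{j=1}^n\Delta_{\xi_j}$.

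For the ``in particular'' clause it suffices to note that each $\Delta_{\xi_j}$ preserves the refined type. Indeed $\xi_j$ raises the holomorphic index $\alpha_j$ by one and its adjoint $\xi_j^\star$ lowers it by one, so both composites $\xi_j\xi_j^\star$ and $\xi_j^\star\xi_j$ carry $\cal{A}_X^{\alpha,\beta}$ into itself; therefore so does $\Delta_{\xi_j}$, and a fortiori $\Delta_\mrm d$ respects the decomposition into forms of refined type $(\alpha,\beta)$, so that the space of $\Delta_\mrm d$-harmonic forms inherits the refined type decomposition.

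The genuine content, beyond Proposition \ref{refined Hodge identities}, is the passage from $\mrm d$ to $\partial_X$. Invoking the classical identity on the Kähler manifold $X$ is the most economical route, since it silently disposes of the antiholomorphic and mixed contributions. A self-contained alternative is to expand $\Delta_\mrm d$ directly in the operators $\xi_j,\overline\xi_j$ (where $\overline\xi_j$ denotes the conjugate operator raising the $j$-th antiholomorphic index, so that $\overline\partial_X=\sum_j\overline\xi_j$) and their adjoints. Then the purely holomorphic and purely antiholomorphic off-diagonal terms are killed by Proposition \ref{refined Hodge identities} and its complex conjugate, but one must still check that all mixed anticommutators $\xi_j\overline\xi_k^\star+\overline\xi_k^\star\xi_j$ vanish. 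This is the main obstacle on the direct route, and I would settle it exactly as in the proof of Proposition \ref{refined Hodge identities}: reduce to the Euclidean model using the distinctive Kähler condition, and observe that on $\C^n$ wedging by $\mrm dz_j$ and contracting against $\mrm d\overline z_k$ anticommute to the orthogonality pairing $\langle\mrm dz_j,\mrm d\overline z_k\rangle=0$ between holomorphic and antiholomorphic cotangent directions.
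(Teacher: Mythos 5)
Your argument is correct and follows essentially the same route as the paper: invoke the classical K\"ahler identity relating $\Delta_{\mrm{d}}$ to $\Delta_{\partial_X}$, expand $\Delta_{\partial_X}$ via $\partial_X=\sum_j\xi_j$, and kill the off-diagonal anticommutators using Proposition \ref{refined Hodge identities}, with the type-preservation of each $\Delta_{\xi_j}$ giving the ``in particular'' clause. The only caveat (present equally in the paper's own statement) is the numerical constant: $\Delta_{\mrm{d}}=2\Delta_{\partial}$ together with $\Delta_{\partial}=\sum_j\Delta_{\xi_j}$ yields a factor $2$ rather than $\tfrac12$, which is immaterial for the type-preservation conclusion.
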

\begin{proof}
Since $X$ is a K\"ahler manifold we know that
\[
\Delta_\mrm{d}=2\Delta_{\partial}.
\]
The claim then follows from a direct computation using Proposition \ref{refined Hodge identities}:
	\[\begin{split}
		\Delta_\partial&=\partial\cdot\partial^\star+\partial^\star\cdot\partial\\
		&=\Big(\sum_j\xi_j\Big)\cdot\Big(\sum_k\xi_k^\star\Big)+\Big(\sum_k\xi_k^\star\Big)\cdot \Big(\sum_j\xi_j\Big)\\
		&=\sum_{j,k}\xi_j\cdot\xi_k^\star+\sum_{j,k}\xi_k^\star\cdot\xi_j\\
		&=\sum_j\big(\xi_j\cdot\xi_j^\star+\xi_j^\star\cdot\xi_j\big)+\sum_{j\not=k}\big(\xi_j\cdot\xi_k^\star+\xi_k^\star\cdot\xi_j\big)\\
		&=\sum_j\Delta_{\xi_j}.
	\end{split}\]
\end{proof}

\noindent We denote by $\cal{H}^{\alpha,\beta}(X)$ the space of harmonic differential forms of refined type $(\alpha,\beta)$, i.e.,  
\[
\cal{H}^{\alpha,\beta}(X):=\big\{\psi\in A^{\alpha,\beta}(X)\ \mid\ \Delta_\mrm{d}\psi=0\big\}.
\]
Note that $\cal{H}^{\alpha,\beta}(X)=\overline{\cal{H}^{\beta,\alpha}(X)}$ since the Laplacian operator $\Delta_\mrm{d}$ is real.
\begin{corollary}\label{plectic Hodge structure}
	The de Rham cohomology of any compact rigidified K\"ahler manifold $X$  admits a canonical direct sum decomposition
	\[
	\mathrm{H}^k_{\mathrm{dR}}(X/\C)=\bigoplus_{\lvert \alpha +\beta\rvert=k}\cal{H}^{\alpha,\beta}(X),\qquad \alpha,\beta\in \{0,1\}^{\mrm{dim}X}
	\]
	in terms of harmonic differential forms of refined types.
\end{corollary}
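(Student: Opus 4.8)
The plan is to combine the Hodge theorem with Corollary~\ref{respect}, the latter already containing the analytic core of the argument. Since $X$ is compact and we have fixed the distinctive Kähler metric $\mrm{d}s^2$, the Hodge theorem supplies a canonical isomorphism
\[
\mrm{H}^k_{\mrm{dR}}(X/\C)\cong\cal{H}^k(X):=\Ker\big(\Delta_\mrm{d}\colon A^k(X)\to A^k(X)\big),
\]
identifying each de Rham class with its unique harmonic representative. It therefore suffices to prove the asserted decomposition for the space $\cal{H}^k(X)$ of harmonic $k$-forms.

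First I would take global smooth sections of the bundle identity $\wedge^k\cal{A}_X=\bigoplus_{\lvert\alpha+\beta\rvert=k}\cal{A}_X^{\alpha,\beta}$ to obtain the decomposition of smooth forms $A^k(X)=\bigoplus_{\lvert\alpha+\beta\rvert=k}A^{\alpha,\beta}(X)$, and write $\psi=\sum_{\alpha,\beta}\psi^{\alpha,\beta}$ for the components of a given $\psi\in A^k(X)$. The key step is then to invoke Corollary~\ref{respect}: because $\Delta_\mrm{d}$ preserves refined types, one has $\Delta_\mrm{d}\psi=\sum_{\alpha,\beta}\Delta_\mrm{d}\psi^{\alpha,\beta}$ with each $\Delta_\mrm{d}\psi^{\alpha,\beta}$ again of type $(\alpha,\beta)$. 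As distinct refined types lie in distinct summands of $A^k(X)$, the equation $\Delta_\mrm{d}\psi=0$ is equivalent to $\Delta_\mrm{d}\psi^{\alpha,\beta}=0$ for every pair $(\alpha,\beta)$, whence
\[
\cal{H}^k(X)=\bigoplus_{\lvert\alpha+\beta\rvert=k}\cal{H}^{\alpha,\beta}(X).
\]
Composing with the Hodge isomorphism yields the displayed decomposition.

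Because Corollary~\ref{respect} has already carried out the hard work of showing that $\Delta_\mrm{d}$ respects the refined types, I expect no genuine obstacle in the packaging above; the subtle point, rather, is the assertion of canonicity. The harmonic projection depends a priori on the chosen distinctive Kähler metric, so to make the decomposition intrinsic one would identify each $\cal{H}^{\alpha,\beta}(X)$ with a refined Dolbeault-type cohomology group independent of the metric, following the pattern of the classical Kähler case (\cite{GriffithsHarris}). For the statement as given only the existence of the decomposition is required, and the argument above delivers it.
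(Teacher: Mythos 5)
Your argument is correct and follows exactly the route of the paper: invoke the Hodge theorem to identify $\mathrm{H}^k_{\mathrm{dR}}(X/\C)$ with harmonic forms, then use Corollary~\ref{respect} (that $\Delta_\mrm{d}$ preserves refined types) to split the harmonic space according to the decomposition $A^k(X)=\bigoplus_{\lvert\alpha+\beta\rvert=k}A^{\alpha,\beta}(X)$. Your closing remark about canonicity is also well placed, since the paper defers the metric-independence of the decomposition to a separate lemma immediately afterwards.
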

\begin{proof}
	The usual Hodge decomposition describes the de Rham cohomology of K\"ahler manifolds in terms of harmonic differential forms. As the Laplacian operator $\Delta_\mrm{d}$ respects differential forms of refined type $(\alpha,\beta)$ when $X$ is rigidified K\"ahler (Corollary \ref{respect}), the claim follows.
\end{proof}
\begin{remark}
When $X$ is a compact quaternionic Shimura variety, Corollary \ref{plectic Hodge structure} can also be deduced from results proved by Matsushima and Shimura in \cite{Matsu-Shimu}. 
\end{remark}

\begin{lemma}
	The refined Hodge decomposition does not depend on the choice of a distinctive K\"ahler metric, i.e., it only depends on the rigidified complex structure.
\end{lemma}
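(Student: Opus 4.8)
The plan is to show that the refined Hodge decomposition is an invariant of the rigidified complex structure by exhibiting it in purely cohomological terms, independent of the metric. The key observation is that each space $\cal{H}^{\alpha,\beta}(X)$ of harmonic forms maps isomorphically onto a subspace of $\mrm{H}^k_\mrm{dR}(X/\C)$, and I would characterize the image of $\bigoplus_{\beta}\cal{H}^{\alpha,\beta}(X)$ (for fixed $\alpha$, or rather the appropriate partial sums) through a filtration defined entirely by the foliation and the operators $\xi_j$.

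First I would introduce, for each $j=1,\dots,n$, the decreasing filtration on differential forms whose step $F^{1_j}$ consists of those forms $\omega$ with $\alpha_j=1$ in every refined-type component, i.e., $F^{1_j}\cal{A}_X=\bigoplus_{\alpha_j=1}\cal{A}_X^{\alpha,\beta}$. Because the operators $\xi_k$ raise the holomorphic degree in the $k$-th slot and $\partial_X=\sum_k\xi_k$, one checks that $\mrm{d}_X$ preserves each such filtration, so it descends to a filtration $F^{1_j}\mrm{H}^k_\mrm{dR}(X/\C)$ on cohomology. The content of the argument is the degeneration statement: the spectral sequence attached to each filtration $F^{1_j}$ degenerates at the first page, equivalently the natural map
\[
\bigoplus_{\alpha_j=1}\cal{H}^{\alpha,\beta}(X)\too F^{1_j}\mrm{H}^k_\mrm{dR}(X/\C)
\]
is an isomorphism. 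Granting this for each $j$, the individual refined piece $\cal{H}^{\alpha,\beta}(X)$ is recovered as the intersection over all $j$ of the appropriate filtration steps and their conjugates, and these filtrations are manifestly independent of the metric since they are built only from the foliation and $\mrm{d}_X$.

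The main obstacle is establishing the first-page degeneration, and here I would leverage the metric-dependent harmonic theory already in hand to prove a metric-independent conclusion, exactly as in the classical argument that $\partial\overline{\partial}$-degeneration is a topological fact once it is known for one metric. Concretely, Corollary \ref{respect} gives $\Delta_\mrm{d}=\tfrac12\sum_j\Delta_{\xi_j}$ with the summands commuting (a consequence of Proposition \ref{refined Hodge identities}), so a form is $\Delta_\mrm{d}$-harmonic if and only if it is simultaneously harmonic for every $\Delta_{\xi_j}$; this simultaneous harmonicity, together with the Hodge-theoretic identification of harmonic representatives, forces each $\xi_j$ and $\xi_j^\star$ to vanish on harmonic forms and thereby yields the degeneration. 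Since the dimensions $\dim_\C F^{1_j}\mrm{H}^k_\mrm{dR}(X/\C)$ computed this way match the Betti-type numbers independently of the metric, any two distinctive K\"ahler metrics produce the same filtrations on cohomology, and hence the same refined decomposition $\bigcap$-ed out of them.

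I would then conclude by noting that the refined type decomposition is the common refinement $\cal{H}^{\alpha,\beta}(X)\cong F^{\alpha}\cap\overline{F^{\beta}}$, where $F^{\alpha}=\bigcap_{\alpha_j=1}F^{1_j}$, so that the decomposition depends only on the collection of filtrations $\{F^{1_j}\}_{j=1}^n$ on $\mrm{H}^k_\mrm{dR}(X/\C)$; as these filtrations were defined without reference to any metric, the refined Hodge decomposition is an invariant of the rigidified complex structure alone. The one point requiring care is to verify that the intersection formula genuinely isolates each summand rather than a larger sum --- this follows from the $n$-fold transversality $\cal{H}^{\alpha,\beta}(X)=\overline{\cal{H}^{\beta,\alpha}(X)}$ recorded before Corollary \ref{plectic Hodge structure}, combined with the fact that the $2n$ filtration steps $F^{1_j}$ and $\overline{F^{1_j}}$ are in general position on harmonic forms because of the commuting Laplacian decomposition.
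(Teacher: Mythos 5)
Your proposal is correct in outline but packages the argument differently from the paper, and the one step you leave as a gesture is exactly where the paper's entire proof lives. The paper adapts Voisin's Proposition 6.11 directly: it characterizes $\cal{H}^{\alpha,\beta}(X)$ inside $\mrm{H}^k_{\mrm{dR}}(X/\C)$ as the set of classes representable by a closed form of refined type $(\alpha,\beta)$ --- a manifestly metric-free description --- and proves this by writing a closed form $\psi$ of type $(\alpha,\beta)$ as $\eta+\Delta_{\mrm{d}}\zeta$ with $\eta$ harmonic, using Corollary \ref{respect} to arrange that $\eta$ and $\zeta$ are again of type $(\alpha,\beta)$, and then showing that $\Delta_{\mrm{d}}\zeta=\mrm{d}\mrm{d}^\star\zeta$ is exact (via $\ker(\mrm{d})\cap\mrm{Im}(\mrm{d}^\star)=0$). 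You instead introduce the filtrations $F^{1_j}$ on the de Rham complex, which is a nice, more structural point of view: it exhibits the partial sums $\bigoplus_{\alpha_j=1}\cal{H}^{\alpha,\beta}$ as canonical subspaces of cohomology, and your intersection formula $\cal{H}^{\alpha,\beta}=F^{\alpha}\cap\overline{F^{\beta}}$ in degree $k=\lvert\alpha+\beta\rvert$ does isolate the individual pieces --- though the reason is simply the degree count ($\gamma\ge\alpha$, $\delta\ge\beta$ and $\lvert\gamma+\delta\rvert=\lvert\alpha+\beta\rvert$ force $\gamma=\alpha$, $\delta=\beta$), not any ``general position'' statement.

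Two points in your degeneration step need repair, however. First, the commutativity of the $\Delta_{\xi_j}$ is neither established in the paper nor needed: the equivalence ``$\Delta_{\mrm{d}}$-harmonic iff $\xi_j\psi=\xi_j^\star\psi=0$ for all $j$'' already follows from positivity, $\langle\Delta_{\xi_j}\psi,\psi\rangle=\lVert\xi_j\psi\rVert^2+\lVert\xi_j^\star\psi\rVert^2\ge0$. Second, and more seriously, the vanishing of $\xi_j$ and $\xi_j^\star$ on harmonic forms does not by itself ``yield the degeneration'': one cannot run the usual Fr\"olicher-type argument through harmonic theory for $\Delta_{\xi_j}$, because that operator only differentiates in the $j$-th coordinate direction and is not elliptic. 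What actually proves your surjectivity claim $\bigoplus_{\alpha_j=1}\cal{H}^{\alpha,\beta}\twoheadrightarrow F^{1_j}\mrm{H}^k_{\mrm{dR}}(X/\C)$ is the elliptic theory for $\Delta_{\mrm{d}}$ combined with the exactness argument above --- that is, precisely the computation the paper performs. So your route is viable, and arguably cleaner as a formulation of metric-independence, but it is not shorter: its key lemma \emph{is} the paper's proof.
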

\begin{proof}
Voisin's proof (\cite{Voisin}, Proposition 6.11) of the independence of the Hodge decomposition from the choice of K\"ahler metric can be easily adapted to our setting. We aim to show	 that the subspace of de Rham cohomology classes which are representable by a closed form of refined type $(\alpha,\beta)$ coincides with $\cal{H}^{\alpha,\beta}(X)$. 
Let $\psi$ be a closed form of refined type $(\alpha,\beta)$. We can write in a unique way $\psi=\eta+\Delta_\mrm{d}\zeta$ with $\eta$ harmonic. Since $\Delta_\mrm{d}$ respects refined types (Proposition \ref{respect}), we can further suppose that both $\eta$ and $\zeta$ are of refined type $(\alpha,\beta)$. As $\ker(\Delta_\mrm{d})\subseteq\ker(\mrm{d})$, we see that $\mrm{d}\eta=0$ and compute that $\Delta_\mrm{d}\zeta$ is closed:
\[
\mrm{d}\circ\Delta_\mrm{d}\zeta=\mrm{d}\eta+\mrm{d}\circ\Delta_\mrm{d}\zeta=\mrm{d}\psi=0.
\]
Since $\mrm{d}^2=0$, we further deduce that $\mrm{d}^\star \mrm{d}\zeta\in \ker(\mrm{d})$.  As the intersection $\ker (\mrm{d})\cap\text{Im}(\mrm{d}^\star)$ is trivial, we obtain $\mrm{d}^\star \mrm{d}\zeta=0$. Hence, $\psi=\eta+\mrm{d}\mrm{d}^\star\zeta$, i.e., $\psi$ and $\eta$ are in the same de Rham class.
\end{proof}

\noindent In other words, we have shown that the cohomology of a compact rigidified K\"ahler manifold is endowed with a canonical plectic Hodge structure refining its classical Hodge structure.

\subsection{Functoriality of plectic Hodge structures}\label{Functoriality}

\begin{definition}
Let $X, Y$ be $n$-dimensional rigidified complex manifolds. A morphism from $X$ to $Y$ is function $\varphi\colon X\to Y$ such that for every point $x\in X$ there is a chart $(U,\phi)$ on $X$ with $x\in U$ and a chart $(V,\psi)$ on $Y$ with $\varphi(x)\in V$ such that $\varphi(U)\subseteq V$ and the composition $\psi\circ\varphi\circ\phi^{-1}$ is a rigid holomorphic function.
\end{definition}
\noindent What follows is a simple adaptation of \cite[Section 7.3.2]{Voisin}.
Let $X,Y$ be $n$-dimensional rigidified K\"ahler manifolds and $\varphi\colon X\to Y$ a morphism of rigidified complex manifolds. Using the de Rham cohomology description of the pullback map, it is clear that $\varphi^*\colon \mrm{H}^k(Y,\Z)\to\mrm{H}^k(X,\Z)$ is a morphism of $n$-plectic Hodge structures of degree $0$. 

\begin{lemma}
    The pushforward $\varphi_*\colon\mrm{H}^k(X,\Z)\to\mrm{H}^k(Y,\Z)$ is a morphism of $n$-plectic Hodge structures of degree $0$ for every $k\ge0$.
\end{lemma}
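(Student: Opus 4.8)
The plan is to deduce the statement for the pushforward $\varphi_*$ from the already-established fact that the pullback $\varphi^*$ is a morphism of $n$-plectic Hodge structures, using Poincar\'e duality as the bridge. The key observation is that for a proper map $\varphi\colon X\to Y$ between compact oriented manifolds of the same dimension $2n$ (here $n$ complex, so $2n$ real), the pushforward $\varphi_*$ on cohomology is, up to sign conventions, the adjoint of $\varphi^*$ with respect to the Poincar\'e duality pairings on $X$ and $Y$. Concretely, $\varphi_*$ fits into the commutative square relating $\mrm{H}^k(X)$ to $\mrm{H}^k(Y)$ via the duality isomorphisms $\mrm{H}^k(X)\cong \mrm{H}^{2n-k}(X)^\vee$ and likewise for $Y$, with $\varphi_*$ corresponding to the transpose of $\varphi^*\colon \mrm{H}^{2n-k}(Y)\to\mrm{H}^{2n-k}(X)$.

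The first step is therefore to verify that the Poincar\'e duality pairing on a compact rigidified K\"ahler manifold is compatible with the plectic Hodge structure. I would check that cup product followed by integration, $\mrm{H}^k(X)\otimes\mrm{H}^{2n-k}(X)\to\C$, pairs $\cal{H}^{\alpha,\beta}(X)$ nontrivially only against $\cal{H}^{\underline{1}-\alpha,\underline{1}-\beta}(X)$. This is immediate from the explicit wedge-product description of refined types: a form of type $(\alpha,\beta)$ wedged with a form of type $(\alpha',\beta')$ has top degree contribution exactly when $\alpha+\alpha'=\beta+\beta'=\underline{1}=(1,\dots,1)$, since each $\mrm{d}z_j$ and $\mrm{d}\overline z_j$ must appear precisely once for the integrand to be a nonzero multiple of the volume form. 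Thus Poincar\'e duality identifies $\cal{H}^{\alpha,\beta}(X)$ with the dual of $\cal{H}^{\underline 1-\alpha,\underline 1-\beta}(X)$, which says precisely that the duality pairing is a morphism of plectic Hodge structures of the appropriate weight.

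Given this compatibility, the second step is purely formal. Since $\varphi^*$ is a morphism of $n$-plectic Hodge structures of degree $0$, its transpose with respect to the plectic-compatible Poincar\'e pairings is again a morphism of $n$-plectic Hodge structures of degree $0$; the degree bidegree indices $(\alpha,\beta)$ are carried to $(\alpha,\beta)$ because the two applications of $(\alpha,\beta)\mapsto(\underline 1-\alpha,\underline 1-\beta)$ coming from the source and target dualities cancel. Identifying this transpose with $\varphi_*$ via Poincar\'e duality then yields the claim. I expect the only genuine point requiring care is the bookkeeping of the two duality isomorphisms and the verification that $\varphi_*$ really is the transpose of $\varphi^*$ under them --- i.e., the projection formula $\int_Y \varphi_*\psi\wedge\eta=\int_X\psi\wedge\varphi^*\eta$ --- rather than anything genuinely Hodge-theoretic; the plectic refinement adds no new difficulty once the pairing is shown to respect refined types.
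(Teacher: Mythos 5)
Your proposal is correct and follows essentially the same route as the paper: characterize $\varphi_*$ as the adjoint of $\varphi^*$ under Poincar\'e duality, observe that the cup-product pairing matches $\cal{H}^{\alpha,\beta}$ only with $\cal{H}^{\underline{1}-\alpha,\underline{1}-\beta}$, and conclude by transposition. The only cosmetic difference is how the identification $\cal{H}^{\alpha,\beta}\cong(\cal{H}^{\underline{1}-\alpha,\underline{1}-\beta})^\vee$ is completed: the paper pins down the dimensions via the Hodge star operator, while you get the same nondegeneracy formally from the perfectness of the full pairing together with the block-orthogonality of refined types.
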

\begin{proof}
Recall that Poincar\'e duality $\langle\hspace{1mm},\hspace{0.5mm}\rangle_X\colon \colon\mrm{H}^k(X,\Z)\times \colon\mrm{H}^{2n-k}(X,\Z)\to\Z$ gives an isomorphism $\mrm{H}^k(X,\Z)\cong \mrm{H}^{2n-k}(X,\Z)^\vee$ between the maximal torsion-free quotients of the cohomology groups which can be used to characterize the pushforward $\varphi_*$ with the equation
    \[ \big\langle\varphi_*\kappa,\eta\big\rangle_Y=\big\langle\kappa,\varphi^*\eta\big\rangle_X \qquad\forall\ \kappa\in\mrm{H}^k(X,\Z),\ \eta\in\mrm{H}^{2n-k}(Y,\Z).
    \]
    Suppose now that $\kappa\in\cal{H}^{\alpha,\beta}(X)$, then to prove the lemma we need to show that $\varphi_*\kappa\in \cal{H}^{\alpha,\beta}(Y)$. This is a direct consequence of equality \eqref{Perpendicular} below and the fact that the pullback map $\varphi^*$ is a morphism of plectic Hodge structures of weight $0$. 
    
    \noindent For $\alpha\in\{0,1\}^n$ denote by $\alpha^c\in\{0,1\}^n$ the unique element such that $\alpha+\alpha^c=\underline{1}$. We claim that for any $(\alpha,\beta)$ satisfying $\lvert\alpha+\beta\rvert=k$ we have
    \begin{equation}\label{Perpendicular}
\cal{H}^{\alpha,\beta}=\left(\bigoplus_{(\gamma,\delta)\not=(\alpha^c,\beta^c)}\cal{H}^{\gamma,\delta}\right)^\perp
    \end{equation}
    where the pairs $(\gamma,\delta)$ also satisfy $\lvert\gamma+\delta\rvert=2n-k$ and the orthogonality is taken with respect to the Poincar\'e pairing. 
    To prove the claim we note that $\cal{H}^{\alpha,\beta}$ is contained in the right hand side (RHS), and the dimensions of the two spaces coincide
    \begin{equation}\label{comparingDIMS}
    \mrm{dim}\hspace{0.5mm} \text{RHS}=\mrm{dim}\hspace{0.5mm} \cal{H}^{\alpha^c,\beta^c}=\mrm{dim}\hspace{0.5mm} \cal{H}^{\alpha,\beta}.
    \end{equation}
Note that the first equality of \eqref{comparingDIMS} follows from the definitions, while the second comes from the isomorphism $\star\colon \cal{H}^{\alpha,\beta}\cong \cal{H}^{\alpha^c,\beta^c}$ given by the Hodge star operator induced by the \emph{distinguished K\"ahler metric}. We refer to \cite[page 82]{GriffithsHarris} for the definition of Hodge's star operator and to \cite[pages 101-102]{GriffithsHarris} for the relevant properties.  
\end{proof}

\section{Algebraicity of complex tori with real multiplication}
While the content of this section is well-known to experts, we decided to include it for the convenience of the reader.
Recall that for any complex torus $T=V/\Lambda$ there is an injective ring homomorphism 
\[
\mrm{End}(T)\hookrightarrow \mrm{End}_\Z(\Lambda).
\]
In particular, the ring $\mrm{End}(T)$ is torsion-free and finitely generated as a $\Z$-module.
\begin{definition}
   A complex torus $T$ has real multiplication if there exists a totally real field $L$ with $[L:\bb{Q}]=\mrm{dim}T$, and a ring homomorphism $\theta\colon L\to\mrm{End}(T)_\bb{Q}$ such that $\theta(1)=\mrm{id}_T$. We say that $T$ has real multiplication by an order $\cal{O}$ if $T$ has real multiplication and $\cal{O}:=\theta^{-1}(\mrm{End}(T))$.
\end{definition}

\begin{lemma}\label{RMbyMAX}
Any complex torus $T$ with real multiplication is isogenous to a complex torus $T'$ with real multiplication by the ring of integers of a totally real number field.
\end{lemma}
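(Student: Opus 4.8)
The plan is to realize the isogeny concretely at the level of the lattice. Let $T=V/\Lambda$ with real multiplication $\theta\colon L\to\mrm{End}(T)_\Q$, where $L$ is totally real of degree $d=\mrm{dim}T$, and let $\OO=\theta^{-1}(\mrm{End}(T))$ be the associated order in $L$. The key structural observation is that $V\cong\Lambda\otimes_\Z\R$ becomes a module over $L\otimes_\Q\R$, and that $\Lambda\otimes_\Z\Q$ is a one-dimensional $L$-vector space: indeed $\Lambda\otimes_\Z\Q$ is an $L$-module of $\Q$-dimension $d=[L:\Q]$, and since $L$ is a field any nonzero $L$-module of this dimension is free of rank one. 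The first step I would carry out is to fix such an $L$-basis, identifying $\Lambda\otimes_\Z\Q\cong L$ as $L$-modules, so that $\Lambda$ itself corresponds to a finitely generated $\OO$-submodule $M\subset L$ that spans $L$ over $\Q$, i.e. a full $\OO$-lattice in $L$.

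Next I would replace $\OO$ by the maximal order $\OO_L$, the ring of integers of $L$. The standard mechanism is to form $M':=\OO_L\cdot M$, the $\OO_L$-module generated by $M$ inside $L$. Since $\OO_L$ is a finitely generated $\Z$-module and $M$ is a full lattice, $M'$ is again a full $\OO_L$-lattice in $L$ containing $M$ with finite index. The inclusion $M\hookrightarrow M'$ is an injection of lattices of the same rank with finite cokernel, and it is $L$-equivariant, hence $\OO$-equivariant. The second step is therefore to set $\Lambda':=M'$ viewed inside $V$ via the real embedding $\Lambda\otimes_\Z\R\cong\Lambda'\otimes_\Z\R=V$, and to define $T':=V/\Lambda'$. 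By construction $\Lambda\subseteq\Lambda'$ with finite index, so the identity on $V$ descends to a surjective homomorphism of complex tori $T\to T'$ with finite kernel, i.e. an isogeny.

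The third step is to verify that $T'$ has real multiplication by $\OO_L$. The action of $L\otimes_\Q\R$ on $V$ preserves $\Lambda'=\OO_L\cdot M$ precisely for elements of $\OO_L$, because multiplication by $\OO_L$ sends $M'$ into itself by the defining property of $M'$ as an $\OO_L$-module. Thus every $\lambda\in\OO_L$ induces an element of $\mrm{End}(T')$ compatible with the holomorphic structure (the $L\otimes_\Q\R$-action on $V$ is $\C$-linear, being the scalar extension of the given real-multiplication action, which commutes with the complex structure), giving a ring homomorphism $\OO_L\to\mrm{End}(T')$ extending $\theta$. One then checks that $\theta^{-1}(\mrm{End}(T'))$ is exactly $\OO_L$: it contains $\OO_L$ and is an order in $L$, hence equals the maximal order.

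I expect the main obstacle to be the bookkeeping in the third step, namely confirming that passing to $\Lambda'=\OO_L\cdot M$ does not accidentally enlarge the endomorphism algebra beyond $\OO_L$ and, more importantly, that the complex structure on $V$ is genuinely preserved by the full $\OO_L$-action rather than merely by $\OO$. This is where the fact that $\theta$ lands in $\mrm{End}(T)_\Q$ — so that the $L$-action is a priori only rational — must be used carefully: the real-multiplication action extends $\R$-linearly to $L\otimes_\Q\R$ acting $\C$-linearly on $V$, and $\OO_L\subset L\subset L\otimes_\Q\R$ inherits this $\C$-linearity automatically, so the only genuine content is the lattice-preservation, which is exactly what the construction $\Lambda'=\OO_L\cdot M$ guarantees. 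Everything else is routine commutative algebra of orders in number fields.
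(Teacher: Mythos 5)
Your overall strategy is the same as the paper's: replace $\Lambda$ by the $\OO_L$-module it generates inside $V$, observe that the two lattices are commensurable, and check that the quotient by the larger lattice has real multiplication by $\OO_L$. However, your first step contains a concrete error. You claim that $\Lambda\otimes_\Z\Q$ has $\Q$-dimension $d=[L:\Q]$ and is therefore a one-dimensional $L$-vector space, and you use this to identify $\Lambda$ with a full $\OO$-lattice $M\subset L$. In fact $\Lambda$ is a lattice in the real vector space underlying $V$, so $\mrm{rk}_\Z\Lambda=2\dim_\C V=2[L:\Q]$ and $\Lambda\otimes_\Z\Q$ is a \emph{two}-dimensional $L$-vector space; this is exactly what Corollary \ref{propertyHOMOLOGY} records ($\Lambda$ is $\OO_L$-projective of rank two). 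The identification $\Lambda\otimes_\Z\Q\cong L$ is therefore false, and $M$ cannot be placed inside $L$.

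Fortunately that identification is never genuinely used: everything afterwards only requires that $\Lambda':=\OO_L\cdot\Lambda\subseteq V$ is an $\OO_L$-stable lattice containing $\Lambda$ with finite index, which holds for a lattice of any rank -- for instance because $n\OO_L\subseteq\OO$ with $n=[\OO_L:\OO]$ gives $\Lambda\subseteq\Lambda'\subseteq\tfrac{1}{n}\Lambda$, which is precisely the paper's argument. With the first step deleted and replaced by the direct definition $\Lambda'=\OO_L\cdot\Lambda$, your second and third steps go through: the $L$-action on $V$ is $\C$-linear because $\theta$ lands in $\mrm{End}(T)_\Q$, the quotient $T\to T'=V/\Lambda'$ is an isogeny, and $\theta^{-1}(\mrm{End}(T'))$ is an order containing $\OO_L$, hence equals $\OO_L$. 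So the proof is repairable and its core matches the paper's, but as written the reduction to a rank-one $\OO$-module in $L$ is wrong.
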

\begin{proof}
    Let $T=V/\Lambda$ be a complex torus with real multiplication given by $\theta\colon L\to \mrm{End}(T)_\bb{Q}$. As $\cal{O}:=\theta^{-1}(\mrm{End}(T))$ is an order in $L$, it has finite index in the ring of integers $\cal{O}_L$. Write $n=[\cal{O}_L:\cal{O}]$ and consider
    \[
\Lambda':=\bigcup_{x\in\cal{O}_L/\cal{O}}x\Lambda.
    \]
    We have $\Lambda\subseteq\Lambda'\subseteq\frac{1}{n}\Lambda$, hence $\Lambda'$ a lattice in $V$ commensurable with $\Lambda$. Moreover, $T':=V/\Lambda'$ is a complex torus with real multiplication by $\cal{O}_L$, isogenous to $T$. 
\end{proof}

\begin{proposition}{(\cite{MayNotes}, Theorem 7.2)}\label{PROJoverDED}
Let $R$ be a Dedekind ring. For a finitely generated $R$-module $M$  the following are equivalent
\begin{itemize}
    \item [$\bfcdot$] $M$ is $R$-projective,
     \item [$\bfcdot$] $M$ is $R$-flat,
      \item [$\bfcdot$]$M$ is $R$-torsion-free.
\end{itemize}
Moreover, any finitely generated and torsion-free $R$-module $M$ is isomorphic to $R^{n-1}\oplus \frak{A}$ for some fractional ideal $\frak{A}$ of $R$ and $n=\mrm{rk}_RM$.
\end{proposition}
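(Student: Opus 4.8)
The plan is to establish the implications projective $\Rightarrow$ flat $\Rightarrow$ torsion-free, which hold over any integral domain, and then to close the cycle by proving the structure statement, from which torsion-free $\Rightarrow$ projective follows at once: a fractional ideal of a Dedekind ring is invertible, hence projective. That projective modules are flat is general, since free modules are flat and flatness passes to direct summands. For flat $\Rightarrow$ torsion-free, observe that over a domain $R$ multiplication by a nonzero $r$ is an injection $R \xrightarrow{\,r\,} R$; tensoring with a flat $M$ preserves injectivity, so $M$ has no $r$-torsion.

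For the structure statement I would induct on $n=\rk_R M$. Writing $K$ for the fraction field of $R$, fix an embedding $M \hookrightarrow M\otimes_R K \cong K^n$. When $n=1$ the image of $M$ is a finitely generated $R$-submodule of $K$, i.e. a fractional ideal $\mathfrak{A}$, so $M\cong\mathfrak{A}$. For the inductive step let $\pi\colon K^n\to K$ be the last-coordinate projection; since $M$ spans $K^n$, restricting $\pi$ to $M$ produces a short exact sequence
\[
0 \to M\cap K^{n-1} \to M \xrightarrow{\pi} \pi(M) \to 0,
\]
whose kernel is torsion-free of rank $n-1$ and whose image $\pi(M)$ is a nonzero fractional ideal by the base case. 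As fractional ideals are projective, the sequence splits, and the inductive hypothesis gives $M\cong R^{n-2}\oplus\mathfrak{B}\oplus\mathfrak{C}$ for two fractional ideals $\mathfrak{B},\mathfrak{C}$.

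The heart of the argument, and the only genuinely Dedekind-specific step, is the collapsing isomorphism $\mathfrak{B}\oplus\mathfrak{C}\cong R\oplus\mathfrak{B}\mathfrak{C}$. After multiplying each ideal by a suitable element of $K^\times$ (which changes neither the isomorphism type of the summand nor, up to a principal factor, the class of the product) I may assume $\mathfrak{B},\mathfrak{C}$ are coprime integral ideals, so that $\mathfrak{B}+\mathfrak{C}=R$ and $\mathfrak{B}\cap\mathfrak{C}=\mathfrak{B}\mathfrak{C}$. The sequence
\[
0 \to \mathfrak{B}\mathfrak{C} \to \mathfrak{B}\oplus\mathfrak{C} \to R \to 0, \qquad x\mapsto(x,-x), \quad (b,c)\mapsto b+c,
\]
then splits because $R$ is free, yielding the claim. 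Collapsing $\mathfrak{B}\oplus\mathfrak{C}$ gives $M\cong R^{n-1}\oplus\mathfrak{A}$ with $\mathfrak{A}=\mathfrak{B}\mathfrak{C}$, completing the induction. Finally $R^{n-1}$ is free and $\mathfrak{A}$ is invertible, hence projective, so $M$ is projective, closing the equivalence. I expect the main obstacle to be arranging coprimality in the collapsing step, which rests on the fact that in a Dedekind ring every ideal class contains a representative prime to any prescribed ideal.
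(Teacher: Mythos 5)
Your argument is correct and is the standard Steinitz-style proof of this classical result. The paper itself gives no proof: it quotes the statement directly from the cited reference (May's notes, Theorem 7.2), and the argument there is essentially the one you give --- the two general implications, rank induction via a splitting off of a fractional ideal, and the collapsing isomorphism $\mathfrak{B}\oplus\mathfrak{C}\cong R\oplus\mathfrak{B}\mathfrak{C}$ obtained after moving to coprime integral representatives. No gaps to report.
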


\begin{corollary}\label{propertyHOMOLOGY}
    Let $T=V/\Lambda$ be a complex torus with real multiplication by $\cal{O}_L$, the ring  of integers of a totally real number field. Then, $\Lambda$ is $\cal{O}_L$-projective of rank two. 
\end{corollary}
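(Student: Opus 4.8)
The plan is to show that $\Lambda$ is a finitely generated, torsion-free $\OO_L$-module of rank two, and then invoke Proposition \ref{PROJoverDED} (with $R=\OO_L$, which is a Dedekind ring as the ring of integers of a number field) to conclude projectivity. The module structure on $\Lambda$ comes from the real multiplication: the embedding $\theta\colon L\to\End(T)_\Q$ restricts, via the inclusion $\End(T)\hookrightarrow\End_\Z(\Lambda)$, to an action of $\OO_L=\theta^{-1}(\End(T))$ on $\Lambda$ by $\Z$-linear endomorphisms, making $\Lambda$ a module over $\OO_L$.

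First I would observe that $\Lambda$ is torsion-free over $\OO_L$: since $\Lambda$ is a lattice in the complex vector space $V$, it is torsion-free as a $\Z$-module, and $\OO_L$-torsion would in particular be $\Z$-torsion (as $\Z\subseteq\OO_L$), so there is none. It is finitely generated over $\OO_L$ because it is already finitely generated over $\Z$ and $\OO_L\supseteq\Z$. By Proposition \ref{PROJoverDED}, torsion-freeness over the Dedekind ring $\OO_L$ then forces $\Lambda$ to be $\OO_L$-projective.

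The remaining point is to pin down the rank. Here I would compute ranks by extending scalars to $\Q$ (or equivalently to $L$). On one hand $\Lambda\otimes_\Z\Q$ has $\Q$-dimension equal to $\rk_\Z\Lambda=2\dim_\C V=2\dim T$, using that $\Lambda$ is a full lattice in $V\cong\C^{\dim T}$. On the other hand $\Lambda\otimes_\Z\Q$ is an $L$-vector space, and $\dim_\Q(\Lambda\otimes_\Z\Q)=[L:\Q]\cdot\dim_L(\Lambda\otimes_\Z\Q)$. Since the real multiplication hypothesis gives $[L:\Q]=\dim T$, comparing the two expressions yields $\dim_L(\Lambda\otimes_\Z\Q)=2$, which is exactly $\rk_{\OO_L}\Lambda$.

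I do not expect a serious obstacle here: the statement is essentially a bookkeeping exercise combining the definition of real multiplication, the general fact that lattices are finitely generated and torsion-free, and the structure theory of modules over Dedekind domains recalled in Proposition \ref{PROJoverDED}. The only place demanding a little care is the rank computation, where one must keep straight the difference between the $\Z$-rank and the $\OO_L$-rank and correctly use the defining equality $[L:\Q]=\dim T$; this is precisely what makes the rank come out to two rather than some other value.
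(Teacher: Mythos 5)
Your overall strategy coincides with the paper's: show that $\Lambda$ is a finitely generated, torsion-free $\mathcal{O}_L$-module of rank two and then invoke Proposition \ref{PROJoverDED}. The one genuinely different ingredient is the rank computation. The paper quotes the fact that $V=\Lambda\otimes_\Z\R$ is a free $\mathcal{O}_L\otimes_\Z\C$-module of rank one (citing Goren) and deduces $\mathrm{rk}_{\mathcal{O}_L}\Lambda=\mathrm{rk}_{\mathcal{O}_L\otimes_\Z\R}V=2$; you instead count $\Q$-dimensions, using $\dim_\Q(\Lambda\otimes_\Z\Q)=2\dim_\C V=2[L:\Q]$ together with $\dim_\Q=[L:\Q]\cdot\dim_L$ to get $\dim_L(\Lambda\otimes_\Z\Q)=2$. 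Your route is more elementary and avoids the external citation; the paper's route establishes along the way the freeness of $V$ over $\mathcal{O}_L\otimes_\Z\C$, which it reuses in the proof of Theorem \ref{RMTisAV}, so nothing essential is lost either way.

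The one step you should repair is the justification of torsion-freeness. What you need is: if a nonzero $x\in\mathcal{O}_L$ kills $\lambda\in\Lambda$, then some nonzero integer kills $\lambda$. The containment $\Z\subseteq\mathcal{O}_L$ gives the \emph{opposite} implication ($\Z$-torsion implies $\mathcal{O}_L$-torsion) and does not by itself yield the one you want. Two standard fixes: (i) as in the paper, note that $\theta$ is a ring homomorphism from the field $L$, so every nonzero $x\in\mathcal{O}_L$ acts on $\Lambda$ through an element of $\mathrm{M}_{\mathrm{rk}\Lambda}(\Z)\cap\mathrm{GL}_{\mathrm{rk}\Lambda}(\Q)$, hence injectively; or (ii) observe that $N_{L/\Q}(x)/x$ is an algebraic integer lying in $L$, hence in $\mathcal{O}_L$, so $x\lambda=0$ forces $N_{L/\Q}(x)\lambda=0$ with $N_{L/\Q}(x)$ a nonzero rational integer, contradicting $\Z$-torsion-freeness of the lattice. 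With that repaired, your proof is complete.
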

\begin{proof}
By considering the inclusion $\mrm{End}(T)\hookrightarrow \mrm{End}_\Z(\Lambda)\cong \mrm{M}_{\mrm{rk}\Lambda}(\Z)$, we see that the non-zero elements of $\cal{O}_L$ act on $\Lambda$ via elements of $\mrm{M}_{\mrm{rk}\Lambda}(\Z)\cap\mrm{GL}_{\mrm{rk}\Lambda}(\bb{Q})$. Hence, $\Lambda$ is $\cal{O}_L$-torsion-free. Now, \cite[Corollary 2.6]{GorenHMVs} tells us that $V=\Lambda\otimes_\Z\R$ is a free $\cal{O}_L\otimes_\Z\C$-module of rank one. Therefore, we compute that 
\[
\mrm{rk}_{\cal{O}_L}\hspace{0.5mm}\Lambda=\mrm{rk}_{\cal{O}_L\otimes_\bb{Z}\R}\hspace{0.5mm}V=2\cdot \mrm{rk}_{\cal{O}_L\otimes_\bb{Z}\C}\hspace{0.5mm}V=2.
\]
 Proposition \ref{PROJoverDED} then finishes the proof.
\end{proof}

The following result can also be found in \cite[Ch. IX, Lemma 1.4]{Geer}.
\begin{theorem}\label{RMTisAV}
Every complex torus $T$ with real multiplication is an abelian variety.
\end{theorem}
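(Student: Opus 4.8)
The plan is to reduce to the well-understood case of complex tori with real multiplication by a maximal order and then produce a polarization explicitly. First I would invoke Lemma \ref{RMbyMAX} to replace $T$ by an isogenous torus $T'$ with real multiplication by $\OO_L=\cal{O}_L$, the full ring of integers of a totally real field $L$ with $[L:\Q]=\dim T'$; since being an abelian variety is an isogeny-invariant property, it suffices to prove the statement for $T'$. So from now on I assume $T=V/\Lambda$ carries an action $\theta\colon L\to\End(T)_\Q$ with $\theta^{-1}(\End(T))=\OO_L$.

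Next I would use the structural input already assembled in the excerpt. By Corollary \ref{propertyHOMOLOGY}, the lattice $\Lambda$ is a projective $\OO_L$-module of rank two, and $V=\Lambda\otimes_\Z\R$ is free of rank one over $\OO_L\otimes_\Z\C\cong\C^{\dim T}$. Writing $L\otimes_\Q\R\cong\R^{g}$ via the $g=[L:\Q]$ real embeddings $\sigma_1,\dots,\sigma_g$, the complex structure on $V$ decomposes compatibly with this splitting, so that $V\cong\bigoplus_{i=1}^g V_i$ with each $V_i$ a one-dimensional complex vector space on which $\OO_L$ acts through $\sigma_i$. The goal is to build a Riemann form: a nondegenerate alternating form $E\colon\Lambda\times\Lambda\to\Z$ whose $\R$-bilinear extension satisfies $E(ix,iy)=E(x,y)$ and whose associated Hermitian form $H(x,y)=E(ix,y)+iE(x,y)$ is positive definite. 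By the Riemann relations this is exactly what upgrades the complex torus to an abelian variety.

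The construction I have in mind is the standard $\OO_L$-equivariant one. Using Proposition \ref{PROJoverDED}, I write $\Lambda\cong\OO_L\oplus\frak{A}$ for a fractional ideal $\frak{A}$ of $\OO_L$, and I seek $E$ of the form $E(x,y)=\mathrm{Tr}_{L/\Q}\big(\lambda\cdot h(x,y)\big)$, where $h$ is an $\OO_L$-Hermitian form on $\Lambda\otimes_\Z\Q$ valued in $L$ and $\lambda\in L$ is a suitable totally imaginary-type multiplier (concretely an element with $\sigma_i(\lambda)$ of the correct sign for all $i$, obtained from an element of $\frak{d}_{L/\Q}^{-1}$ times a totally positive unit after adjusting by $i$ in each factor $V_i$). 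The trace pairing on $\OO_L$ together with the inverse different guarantees integrality of $E$ on $\Lambda$, while the freedom to choose $\lambda$ totally positive lets me force the Hermitian form $H$ to be positive definite on each factor $V_i$ simultaneously. Compatibility $E(ix,iy)=E(x,y)$ follows because the complex structure acts $L\otimes\R$-linearly, so it preserves each $V_i$ and commutes with the real-multiplication action used to define $h$.

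The main obstacle is the positivity: one must exhibit a single multiplier $\lambda\in L^\times$ that is simultaneously of the correct sign at all $g$ archimedean places so that $H$ is positive definite, rather than merely nondegenerate. This is where total reality of $L$ is essential, since it makes the approximation by totally positive elements possible; the integrality and alternating properties are then comparatively formal consequences of the trace-form and inverse-different bookkeeping. I would therefore devote the core of the argument to checking that the chosen $\lambda$ renders $H$ positive definite, and relegate the verification of the Riemann relations and integrality to routine computation.
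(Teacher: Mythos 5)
Your proposal is correct in outline and follows the paper's reduction steps exactly (isogeny to the maximal order via Lemma \ref{RMbyMAX}, then $\Lambda\cong\cal{O}_L\oplus\frak{A}$ via Corollary \ref{propertyHOMOLOGY} and Proposition \ref{PROJoverDED}, then the eigenspace decomposition $V=\bigoplus_\sigma V^\sigma$), but it diverges at the final step. The paper does \emph{not} construct a Riemann form: it instead uses the density of $L$ in $L\otimes_\Q\R$ and the surjectivity of $L^\times\to\{\pm1\}_\Sigma$ to rescale the lattice into the standard shape $\Lambda_z=\cal{O}_L\cdot z+\frak{B}$ with $z$ lying in the product of upper half-planes at every embedding, and then cites the known algebraicity of such tori (\cite[Corollary 2.10]{GorenHMVs}). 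You keep the lattice fixed and instead push the sign adjustment into the choice of the multiplier $\lambda$ in $E(x,y)=\mathrm{Tr}_{L/\Q}(\lambda\, h(x,y))$; this is essentially the proof of the result the paper cites, so the two routes are two faces of the same weak-approximation argument, yours being self-contained where the paper's outsources the polarization to a reference. Your sketch defers the positivity verification, but that step is standard: with the lattice in the form $\cal{O}_L\oplus\frak{A}$ one checks that $H$ is definite at each place exactly when $\sigma_j(\lambda)$ has the sign matching the imaginary part of the $j$-th period, and such $\lambda$ exists in the relevant fractional ideal by the same density argument the paper uses. One terminological quibble: since $L$ is totally real it carries no conjugation, so the form $h$ you trace down should be described as an alternating $L$-bilinear form (e.g.\ $x_1y_2-x_2y_1$ in the coordinates of $\cal{O}_L\oplus\frak{A}$) rather than an $\cal{O}_L$-Hermitian one; with that adjustment the alternating and integrality properties follow from the inverse-different bookkeeping as you indicate.
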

\begin{proof}
     Up to isogeny (\cite{ComplexTori}, Ch. 2, Proposition 1.1), we can assume that $T=V/\Lambda$ has real multiplication by a maximal order $\cal{O}_L$ (Lemma \ref{RMbyMAX}), then $\Lambda\cong \cal{O}_L\oplus \frak{A}$ for some fractional ideal $\frak{A}$ (Corollary \ref{propertyHOMOLOGY} $\&$ Proposition \ref{PROJoverDED}). Set $\Sigma=\mrm{Hom}_\bb{Q}(L,\C)$. As $V$ is a free $\cal{O}_L\otimes_\Z\C$-module of rank one, it admits a decomposition into $1$-dimensional $\C$-subspaces
    \[
    V=\bigoplus_{\sigma\in\Sigma} V^\sigma
    \]
such that an element $x\in\cal{O}_L$ acts on $V^\sigma$ as multiplication by $\sigma(x)$. From a choice of isomorphism $\phi\colon  \Lambda\overset{\sim}{\to} \cal{O}_L\oplus \frak{A}$ of $\cal{O}_L$-modules, we obtain an $\R$-linear isomorphism $(\phi\otimes1)^\sigma\colon V^\sigma\overset{\sim}{\to} \R^2$ for every $\sigma\in\Sigma$ by extending scalars to $\R$. Moreover, transporting the complex structure from $V^\sigma$ to $\R^2$, we can promote them to $\C$-linear identifications $(\phi\otimes1)^\sigma\colon V^\sigma\overset{\sim}{\to}\C$. Thus, there are $\lambda, \mu\in(\C^\times)_\Sigma$ such that $\lambda_\sigma,\mu_\sigma\in\C^\times$ are $\R$-linearly independent $\forall\sigma\in\Sigma$ and
\[
\mu^{-1}(\phi\otimes1)\Lambda=\cal{O}_L\cdot (\lambda\mu^{-1})+ \frak{A}\cdot\underline{1}.
\]
As $L$ is dense in $L\otimes_\Q\R$, the  composition $L^\times\to(L\otimes_\Q\R)^\times\to \{\pm1\}_{\Sigma}$ is surjective, and we can choose $x\in L$ such that every component of $x(\lambda\mu^{-1})\in\C_\Sigma$ has positive imaginary part. Hence, we have shown the existence of $z\in\cal{H}^\Sigma$, a fractional ideal $\frak{B}$ of $\cal{O}_L$, and an isomorphism of complex tori
\[
T\cong \C_\Sigma/\Lambda_z \quad\text{where}\quad \Lambda_z=\cal{O}_L\cdot z+\frak{B}.
\]
We deduce that $T$ is an abelian variety using \cite[Corollary 2.10]{GorenHMVs}. 
\end{proof}
\begin{remark}
    Every complex manifold has at most one algebraic structure because the analytification functor is fully faithful (\cite{SGA1}, Corollaire 4.5). 
\end{remark}
\begin{definition}
    A rational Hodge structure $H_\bb{Q}$ of weight $1$  has real multiplication if there is a totally real field $L$ with $2[L:\bb{Q}]=\mrm{dim}_\Q H_\bb{Q}$, and a ring homomorphism $\theta\colon L\to\mrm{End}(H_\bb{Q})$ such that $\theta(1)=\mrm{id}_{H_\bb{Q}}$.
\end{definition}
\begin{corollary}\label{ALGforHSwithRM}
The isogeny class of complex tori associated to an effective rational Hodge structure of weight $1$ with real multiplication consists of abelian varieties.
\end{corollary}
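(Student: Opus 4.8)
The plan is to reduce Corollary \ref{ALGforHSwithRM} to Theorem \ref{RMTisAV} by constructing, from the Hodge-theoretic data, a complex torus that inherits real multiplication in the sense required by the theorem. Given an effective rational Hodge structure $H_\bb{Q}$ of weight $1$ with real multiplication $\theta\colon L\to\mrm{End}(H_\bb{Q})$, first choose a lattice $H_\bb{Z}\subset H_\bb{Q}$ (any full $\Z$-lattice, e.g.\ one stable under a finite-index order of $L$). The weight-$1$ effectivity gives a Hodge decomposition $H_\bb{Q}\otimes_\bb{Q}\C=H^{1,0}\oplus H^{0,1}$ with $H^{1,0}=\overline{H^{0,1}}$, and the associated complex torus is $T:=(H_\bb{Z}\otimes_\Z\C)/(H^{1,0}\oplus H_\bb{Z})$, or equivalently $T=H^{0,1}/\pi(H_\bb{Z})$ where $\pi$ is the projection along $H^{1,0}$. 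The first step is therefore to write down this torus and record that its dimension equals $\dim_\C H^{0,1}=\tfrac{1}{2}\dim_\Q H_\bb{Q}=[L:\bb{Q}]$.

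Next I would verify that $L$ acts on $T$. Since $\theta(L)\subseteq\mrm{End}(H_\bb{Q})$ consists of morphisms of the rational Hodge structure, each $\theta(x)$ preserves the Hodge decomposition after tensoring with $\C$; in particular it preserves $H^{1,0}$ and hence descends to a $\C$-linear endomorphism of $H^{0,1}$ carrying $\pi(H_\bb{Q})$ to itself. This yields a ring homomorphism $L\to\mrm{End}(T)_\bb{Q}$ with $\theta(1)=\mrm{id}_T$. Combined with the dimension count $[L:\bb{Q}]=\dim_\C T$ from the previous step, this is exactly the definition of real multiplication on the complex torus $T$. The key point to check carefully is that a morphism of Hodge structures induces an \emph{algebraic} (i.e.\ $L$-linear) endomorphism of $T$ rather than merely a real-linear self-map; this follows because $\theta(x)$ is $\C$-linear on $H^{1,0}$ and commutes with the complex structure on $H^{0,1}$ inherited from $H_\bb{Q}\otimes\C$.

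Having produced a complex torus $T$ with real multiplication, Theorem \ref{RMTisAV} immediately gives that $T$ is an abelian variety. To conclude the statement about the whole isogeny class, I would observe that the torus $T$ attached to $H_\bb{Q}$ is well-defined up to isogeny: a different choice of lattice $H_\bb{Z}'$ commensurable with $H_\bb{Z}$ produces an isogenous torus, and any complex torus in the isogeny class associated to $H_\bb{Q}$ arises this way. Since being an abelian variety is an isogeny-invariant property (an isogeny $T\to T'$ forces $T'$ to carry a polarization pulled back and rescaled from $T$, cf.\ the isogeny-invariance used in the proof of Theorem \ref{RMTisAV}), every member of the isogeny class is an abelian variety.

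The main obstacle I expect is bookkeeping rather than conceptual: one must be precise about the passage between the rational data $H_\bb{Q}$ and an integral lattice, and about why the Hodge-structure endomorphisms $\theta(x)$ genuinely land in $\mrm{End}(T)_\bb{Q}$ with the correct compatibility $\theta(1)=\mrm{id}$. The only subtle compatibility is matching the definition of real multiplication for Hodge structures (where $\theta$ lands in $\mrm{End}(H_\bb{Q})$) with the definition for complex tori (where $\theta$ lands in $\mrm{End}(T)_\bb{Q}$); verifying that these are the same datum, via the equivalence between effective weight-$1$ rational Hodge structures and complex tori up to isogeny, is the crux, after which Theorem \ref{RMTisAV} does all the work.
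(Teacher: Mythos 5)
Your proposal is correct and follows essentially the same route as the paper: choose a lattice in $H_\bb{Q}$, observe that the resulting complex torus inherits real multiplication because the $\theta(x)$ are morphisms of the Hodge structure (hence preserve $H^{1,0}$ and act $\C$-linearly on the quotient), and then apply Theorem \ref{RMTisAV}. The paper compresses all of this into the single sentence ``directly from the definitions, the Jacobian associated to any lattice $\Lambda\subset H_\bb{Q}$ is a complex torus with real multiplication''; your write-up merely supplies the bookkeeping that sentence elides.
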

\begin{proof}
    Let $H_\bb{Q}$ be an effective rational Hodge structure of weight $1$ with real multiplication. Directly from the definitions, the Jacobian associated to any lattice $\Lambda\subset H_\bb{Q}$ is a complex torus with real multiplication. The claim then follows from  Theorem \ref{RMTisAV}.
\end{proof}

\section{Archimedean plectic Jacobians}
Recall that an $n$-plectic Hodge structure $H$ is called effective of weight $\underline{1}\in\bb{Z}^n$ if 
\[
H^{\alpha,\beta}\not=0\quad\implies\quad \alpha,\beta\in\bb{N}^n\quad\&\quad\alpha+\beta=\underline{1}.
\]
We can think of an $n$-plectic Hodge structure of weight  $\underline{1}\in\bb{Z}^n$ as a collection of $n$ classical Hodge structure of weight $1$ on the same underlying module by setting for any $j=1,\dots, n$
\begin{equation}\label{functorTOclassic}
F^{1_j}=F^{1_j}(H):=\bigoplus_{\alpha_j\ge 1}H^{\alpha,\beta}.
\end{equation}
Therefore, for any $j=1,\dots,n$ the plectic Jacobian $\mrm{J}_{\plectic}(H,j)$ associated to an effective $n$-plectic Hodge structure $H$  of weight $\underline{1}\in\bb{Z}^{n}$ is the complex torus defined by 
	\[
	\mrm{J}_{\plectic}(H,j):=H\backslash (H\otimes_\Z\C)/F^{1_j}.
	\]
Note that if $H_j=(H,F^{1_j})$ denotes the $j$-th Hodge structure of weight $1$, we see that points of plectic Jacobians parametrize extensions of mixed Hodge structures (\cite{MHS}, Example 3.34)
\[
\mrm{J}_{\plectic}(H,j)=\mrm{Ext}_{\mrm{MHS}}(\Z(-1),H_j).
\]

\subsubsection{Example: products of complex tori.}\label{ExTori}
Recall that the Jacobian of the weight one Hodge structure appearing in the first cohomology group  $\mrm{H}^1(T,\Z)$ of a complex torus $T$ recovers the dual torus $T^\vee$   (\cite{Voisin}, Section 7.2.2). If we consider a product $X=T_1\times\cdots\times T_n$ of complex tori, then the tensor product 
\[
\mrm{H}^n(X,\Z)_{\underline{1}}:=\bigotimes_{j=1}^n\mrm{H}^1(T_j,\Z)
\]
is an effective $n$-plectic Hodge structure of weight $\underline{1}\in\bb{Z}^n$ satisfying
\[
F^{1_j}\mrm{H}^n(X,\C)_{\underline{1}}=F^1\mrm{H}^1(T_j,\C)\otimes_\Z\bigotimes_{k\not=j}\mrm{H}^1(T_k,\Z),
\]
and whose plectic Jacobians can be explicitly described.
For any index $j=1,\dots,n$ let us define the plectic Jacobian $\mrm{J}_\plectic(X,j)$ as the plectic Jacobian associated to the plectic Hodge structure $\mrm{H}^n(X,\Z)_{\underline{1}}$. A direct calculation shows that
\[
 \mrm{J}_\plectic(X,j)\cong T_j^\vee\otimes_\Z\bigotimes_{k\not=j}\mrm{H}^1(T_k,\Z).
\]
In particular, if $T_j$ is an abelian variety defined over a number field, then $\mrm{J}_\plectic(X,j)$ is also an abelian variety with a distinguished model over the same number field.

\subsection{Compact quaternionic Shimura varieties}
Let $F$ be a totally real number field of degree $d=[F:\Q]$ and, to simplify the exposition, of narrow class number one. Let $B/F$ be a non-split quaternion $F$-algebra, denote by $\Sigma=\{\nu_1,\dots,\nu_r\}$ the set of Archimedean places of $F$ at which $B/F$ splits, and fix  isomorphisms 
\[
\iota_{\nu}\colon B\otimes_{F,\nu}\R\cong M_2(\R)\quad \text{for} \quad \nu\in\Sigma.
\]
Given an Eichler order $R$ in $B$, we denote by $\widetilde{\Gamma}$ the subgroup of $R^\times$ consisting of those elements with totally positive reduced norm. The group $\widetilde{\Gamma}$ maps in $\prod_{\nu\in\Sigma}(B\otimes_{F,\nu}\R)^\times\cong\mrm{GL}_2(\R)^{\Sigma}$ and hence it acts on $r$-copies of the Poincar\'e upper half plane $\cal{H}_{\Sigma}=\cal{H}_{\nu_1}\times\cdots\times\cal{H}_{\nu_r}$ via M\"obius transformations. Let $Z$ denote the center of $\mrm{GL}_2(\R)^{\Sigma}$. We suppose that $\Gamma:=\widetilde{\Gamma}/(\widetilde{\Gamma}\cap Z)$ is torsion-free so that the quotient $X_{B}:=\Gamma\backslash\cal{H}_{\Sigma}$ is a compact complex manifold. The holomorphic tangent bundle $\scr{T}$ of $X_{B}$ has a canonical decomposition into line bundles
\[
\scr{T}=\bigoplus_{\nu\in\Sigma}\scr{L}_\nu
\]
where $\scr{L}_\nu$ is the holomorphic line bundle associated to the $\nu$-th automorphy factor, and the image of first Chern class $c_1(\scr{L}_\nu)\in\mrm{H}^2(X_B,\Z)$ in $\mrm{H}^2_\mrm{dR}(X_B/\C)$ can be represented by 
\begin{equation}\label{ChernForm}
\frac{1}{4\pi i}\frac{\mrm{d}z_\nu\wedge\mrm{d}\overline{z}_\nu}{y_\nu^2}.
\end{equation}
Furthermore, for every $k\ge0$, the maximal torsion-free quotient of $\mrm{H}^k(X_B,\Z)$ carries a canonical $r$-plectic Hodge structure (Corollary \ref{plectic Hodge structure})
 which is preserved by Hecke operators (Section \ref{Functoriality}).
 Denote by $L_\nu\colon \mrm{H}^r(X_B,\Z)\to \mrm{H}^{r+2}(X_B,\Z)$ the morphism of $r$-plectic Hodge structures of bidegree $(1_\nu,1_\nu)$ given by cup product with the class $c_1(\scr{L}_\nu)$, and define the strongly primitive component 
\begin{equation}
\mrm{H}^r_\mrm{sp}(X_B,\Z)
\end{equation}
of  $\mrm{H}^r(X_B,\Z)$ as 
the maximal torsion-free quotient of the kernel of $\oplus_{\nu\in\Sigma}L_\nu$.

\begin{lemma}\label{carryPHS}
    The strongly primitive cohomology $\mrm{H}^r_\mrm{sp}(X_B,\Z)$ carries a canonical effective $r$-plectic Hodge structure of weight $\underline{1}\in\Z^r$.
\end{lemma}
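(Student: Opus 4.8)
The plan is to pin down exactly which refined types $(\alpha,\beta)$ survive in the strongly primitive cohomology and to show they are precisely those with $\alpha+\beta=\underline{1}$. By Corollary \ref{plectic Hodge structure}, the complexification of the maximal torsion-free quotient of $\mrm{H}^r(X_B,\Z)$ decomposes as
\[
\mrm{H}^r(X_B,\C)=\bigoplus_{\lvert\alpha+\beta\rvert=r}\mathcal{H}^{\alpha,\beta}(X_B),\qquad \alpha,\beta\in\{0,1\}^r,
\]
so every occurring bidegree already satisfies $\alpha,\beta\in\bb{N}^r$. Writing $a_j:=\alpha_j+\beta_j\in\{0,1,2\}$, the middle-degree condition reads $\sum_{j=1}^r a_j=r$. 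Since each $L_\nu$ is a morphism of $r$-plectic Hodge structures of bidegree $(1_\nu,1_\nu)$, the integral map $\oplus_\nu L_\nu$ respects the refined grading after complexification; its kernel is therefore a sub-plectic Hodge structure, and passing to the maximal torsion-free quotient (which commutes with $-\otimes_\Z\C$) endows $\mrm{H}^r_\mrm{sp}(X_B,\Z)$ with an induced $r$-plectic Hodge structure. It remains to identify the surviving bidegrees, i.e. to determine $\bigcap_\nu\ker L_\nu$.

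Next I would analyse $L_j:=L_{\nu_j}$ bidegree by bidegree. By \eqref{ChernForm} the class $c_1(\scr{L}_{\nu_j})$ is represented in local coordinates by a multiple of $\mrm{d}z_j\wedge\mrm{d}\overline{z}_j$, so cup product with it sends a form of refined type $(\alpha,\beta)$ to one of type $(\alpha+1_j,\beta+1_j)$. When $a_j\ge1$ the target type has a repeated index ($\alpha_j+1=2$ or $\beta_j+1=2$) and the operator vanishes identically, whence $\mathcal{H}^{\alpha,\beta}\subseteq\ker L_j$. When $a_j=0$, i.e. $\alpha_j=\beta_j=0$, the operator $L_j$ is the genuine Lefschetz operator in the $j$-th foliation direction, raising the $j$-th degree from $0$ to $2$. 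The crux is therefore to show that $L_j$ is injective on the $a_j=0$ part in middle degree.

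The hard part will be this partial Hard Lefschetz statement. I would obtain it from the product structure of the distinctive K\"ahler metric: writing $\omega=\sum_{j=1}^r\omega_j$ with $\omega_j=\tfrac{i}{2}\varphi_j\wedge\overline{\varphi}_j$ supported on disjoint coordinates, the operators $L_j=\omega_j\wedge(-)$ commute, and each one, together with its adjoint $\Lambda_j$ and the associated grading operator, forms an $\mathfrak{sl}_2$-triple, the triples for distinct indices commuting with one another. The resulting action of $\mathfrak{sl}_2^{\oplus r}$ on $\mrm{H}^\bullet(X_B,\C)$ --- which for quaternionic Shimura varieties also follows from \cite{Matsu-Shimu} --- yields Hard Lefschetz separately in each direction; in particular $L_j$ is injective on classes of $j$-degree $0$, as required. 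This step relies on the compatibility of the foliation with the metric encoded in Proposition \ref{refined Hodge identities} and Corollary \ref{respect}.

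Granting the injectivity, the conclusion is combinatorial. A summand $\mathcal{H}^{\alpha,\beta}$ lies in $\ker L_j$ if and only if $a_j\ge1$; hence $\bigcap_\nu\ker L_\nu$ consists exactly of the types with $a_j\ge1$ for every $j$. Together with $\sum_{j=1}^r a_j=r$ over $r$ indices, this forces $a_j=1$ for all $j$, that is $\alpha+\beta=\underline{1}$. Consequently
\[
\mrm{H}^r_\mrm{sp}(X_B,\C)=\bigoplus_{\alpha+\beta=\underline{1}}\mathcal{H}^{\alpha,\beta}(X_B)
\]
is an effective $r$-plectic Hodge structure of weight $\underline{1}\in\Z^r$. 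Canonicity is then automatic: the refined Hodge decomposition is independent of the chosen distinctive K\"ahler metric, and the operators $L_\nu$ are defined by cup product with the integral Chern classes $c_1(\scr{L}_\nu)$.
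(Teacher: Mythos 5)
Your proposal is correct and follows the same route as the paper: endow $\ker(\oplus_\nu L_\nu)$ with the inherited $r$-plectic structure (since each $L_\nu$ has bidegree $(1_\nu,1_\nu)$), compute $\ker L_\nu$ refined type by refined type using the explicit representative \eqref{ChernForm} of $c_1(\scr{L}_\nu)$, and conclude by the counting argument $\sum_j(\alpha_j+\beta_j)=r$. The only difference is one of detail: the paper asserts the kernel computation in a single line, whereas you correctly isolate and justify its nontrivial half --- the injectivity of $L_\nu$ on the summands with $\alpha_\nu=\beta_\nu=0$ --- via the commuting direction-by-direction $\mathfrak{sl}_2$-actions (partial Hard Lefschetz), which is indeed what one extracts from the local product structure of the distinctive K\"ahler metric or from \cite{Matsu-Shimu}.
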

\begin{proof}
Since the $L_\nu$'s are morphisms of plectic Hodge structures of bidegree $(1_\nu,1_\nu)$ the strongly primitive cohomology is endowed with an $r$-plectic Hodge structure. The claim follows after realizing that the explicit formula given in \eqref{ChernForm} allows us to compute
\[
\ker\big(L_\nu\colon \mrm{H}^r(X_B,\C)\to \mrm{H}^{r+2}(X_B,\C)\big)=\bigoplus_{\alpha_\nu=1\ \text{or}\ \beta_\nu=1}H^{\alpha,\beta}.
\]
\end{proof}

\begin{remark}
Matsushima and Shimura described the cohomology of $X_B$ in terms of automorphic forms in \cite{Matsu-Shimu}. An inspection of their result shows that the strongly primitive cohomology is the part of the cohomology of $X_B$ which is controlled by cuspidal automorphic representations.
\end{remark}

\noindent For every $\nu\in\Sigma$ let $\gamma_\nu\in R^\times$ be an element such that $\det(\iota_{\nu}(\gamma_\nu))<0$, and $\det(\iota_{\mu}(\gamma_\nu))>0$ if  $\mu\not=\nu$. Such an element exists because $F$ has narrow class number one. We define the non-holomorphic involution $\mrm{Fr}_{\nu_j}$ of $X_B$ by setting  
\begin{equation}
\mrm{Fr}_{\nu_j}(\tau_{\nu_1},\dots,\tau_{\nu_r})=\big(\iota_{\nu_1}(\gamma_{\nu_j})\tau_{\nu_1},\dots,\iota_{\nu_j}(\gamma_{\nu_j})\overline{\tau}_{\nu_j},\dots, \iota_{\nu_r}(\gamma_{\nu_j})\tau_{\nu_r}\big).
\end{equation}
    The involutions $\{\mrm{Fr}_{\nu}\}_{\nu\in\Sigma}$ acting on the cohomology of $X_B$ all commute with each other and with the Hecke operators (see for example \cite[Equation 5 $\&$ Section 3]{Greenberg}), but they are not morphisms of plectic Hodge structures. 
    Indeed, if we write $\mrm{Fr}_\beta=\prod_{\nu,\hspace{0.5mm} \beta_\nu=1}\mrm{Fr}_\nu$ for any $\beta\in\{0,1\}^r$, we find that 
    \begin{equation}\label{explicitFrob}
    \mrm{H}^r_\mrm{sp}(X_B,\C)=\bigoplus_{\alpha+\beta=\underline{1}}\mrm{Fr}_\beta^* \big(H^{\underline{1},\underline{0}}\big),
    \end{equation}
   that is, the base change to $\C$ of the strongly primitive cohomology is spanned by translates of holomorphic differential forms (\cite{Matsu-Shimu} $\&$ \cite[Theorem 1.3]{OdaGeneral}). Nevertheless, for any fixed $\nu\in \Sigma$, the operators $\{\mrm{Fr}_{\mu}\}_{\mu\not=\nu}$ are automorphism of the $\nu$-th Hodge structure of weight one
    \begin{equation}
    H_\Z(X_B,\nu):=\big(\mrm{H}^r_\mrm{sp}(X_B,\Z), F^{1_{\nu}}\big)
    \end{equation}
 attached to the strongly primitive cohomology as in \eqref{functorTOclassic} since 
\[
F^{1_{\nu}} \mrm{H}^r_\mrm{sp}(X_B,\C)=\bigoplus_{\beta,\hspace{0.5mm}\beta_\nu=0}\mrm{Fr}_\beta^* \big(H^{\underline{1},\underline{0}}\big).
\]

\begin{definition}
For every $\nu\in\Sigma$, we define the plectic Jacobian $\mrm{J}_\plectic(X_B,\nu)$ as the Jacobian of the Hodge structure $H_\Z(X_B,\nu)$.
\end{definition}

\begin{proposition}\label{AVs}
    For every $\nu\in\Sigma$, the plectic Jacobian $\mrm{J}_\plectic(X_B,\nu)$ is an abelian variety.
\end{proposition}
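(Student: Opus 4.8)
The plan is to reduce Proposition \ref{AVs} to the algebraicity criterion established earlier in Corollary \ref{ALGforHSwithRM}. That corollary asserts that the isogeny class of complex tori attached to an \emph{effective rational Hodge structure of weight $1$ with real multiplication} consists of abelian varieties. Since $\mrm{J}_\plectic(X_B,\nu)$ is by definition the Jacobian of the weight-one Hodge structure $H_\Z(X_B,\nu)=\big(\mrm{H}^r_\mrm{sp}(X_B,\Z),F^{1_\nu}\big)$, it suffices to verify two things: first, that this Hodge structure is effective of weight $1$, and second, that it carries real multiplication by a totally real field of the correct degree. Effectiveness and the weight-$1$ condition for $H_\Z(X_B,\nu)$ are immediate from Lemma \ref{carryPHS}, which produces an effective $r$-plectic Hodge structure of weight $\underline{1}$ on $\mrm{H}^r_\mrm{sp}(X_B,\Z)$, together with the recipe \eqref{functorTOclassic} that extracts the $\nu$-th classical weight-one Hodge structure. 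So the content of the proof is entirely in producing the real multiplication.

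First I would identify the source of real multiplication. The decomposition \eqref{explicitFrob} exhibits $\mrm{H}^r_\mrm{sp}(X_B,\C)$ as spanned by the Frobenius translates $\mrm{Fr}_\beta^*\big(H^{\underline{1},\underline{0}}\big)$, and the text already records that, for fixed $\nu$, the operators $\{\mrm{Fr}_\mu\}_{\mu\neq\nu}$ are automorphisms of the $\nu$-th Hodge structure $H_\Z(X_B,\nu)$, since they preserve $F^{1_\nu}$. These give extra endomorphisms, but the genuine real multiplication should come from the Hecke algebra: the strongly primitive cohomology is controlled by cuspidal automorphic representations (by the remark following Lemma \ref{carryPHS} and Matsushima--Shimura \cite{Matsu-Shimu}), and the Hecke operators are morphisms of plectic Hodge structures commuting with all the $\mrm{Fr}_\mu$. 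The plan is therefore to pass to a Hecke-isotypic summand: after tensoring with $\bb{Q}$ and decomposing $\mrm{H}^r_\mrm{sp}(X_B,\Q)$ under the commutative Hecke algebra $\bb{T}$, each isotypic piece is acted on by a number field (the Hecke eigenvalue field) which, for cohomology coming from cuspidal automorphic forms on a quaternion algebra, is totally real. I would check that this action is by endomorphisms of the weight-one Hodge structure $H_\Q(X_B,\nu)$ — which holds because the Hecke operators preserve the plectic decomposition and hence $F^{1_\nu}$ — and that the degree of the Hecke field over $\bb{Q}$ on each isotypic block is exactly half the $\bb{Q}$-dimension of that block, so that the real-multiplication dimension constraint $2[L:\bb{Q}]=\dim_\Q H_\Q$ is satisfied blockwise.

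The main obstacle, and the step requiring the most care, is establishing this dimension count $2[L:\bb{Q}]=\dim_\Q H_\Q$ on each Hecke-isotypic component together with the total reality of the Hecke field. The factor of $2$ is exactly the statement that each cuspidal system of eigenvalues contributes a two-dimensional space to the relevant weight-one piece — morally the ``$H^{1_\nu,0_\nu}$ plus $H^{0_\nu,1_\nu}$'' dichotomy — and making this precise means invoking the multiplicity-one structure of the Matsushima--Shimura decomposition and the fact that, via \eqref{explicitFrob}, the Frobenius translates pair up the holomorphic and antiholomorphic parts in the $\nu$-th direction. Total reality of the eigenvalue field is a standard consequence of the Hecke operators being self-adjoint for the Petersson inner product induced by the distinctive K\"ahler metric. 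Once real multiplication by a totally real field of the right degree is in hand on each block, Corollary \ref{ALGforHSwithRM} immediately gives that each corresponding Jacobian is an abelian variety; since $\mrm{J}_\plectic(X_B,\nu)$ is isogenous to the product of these isotypic Jacobians, and a complex torus isogenous to an abelian variety is itself an abelian variety, the proposition follows. I would close by remarking that one could alternatively appeal directly to Theorem \ref{RMTisAV} applied to $\mrm{J}_\plectic(X_B,\nu)$ itself once real multiplication by a single totally real field of degree $\tfrac{1}{2}\dim_\Q H_\Q(X_B,\nu)$ is exhibited, bypassing the isotypic decomposition if a global such field can be produced.
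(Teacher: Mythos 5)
Your overall strategy is the paper's: reduce to Corollary \ref{ALGforHSwithRM} by exhibiting real multiplication by totally real Hecke fields on the weight-one Hodge structure $H_\Q(X_B,\nu)$. But there is a genuine gap in your dimension count, and it is exactly the point where the paper does something you omit. Via \eqref{explicitFrob}, the complexified strongly primitive cohomology is a direct sum of $2^r$ translates $\mrm{Fr}_\beta^*\big(H^{\underline{1},\underline{0}}\big)$, so a single Hecke eigensystem with eigenvalue field $L_\xi$ contributes a $\Q$-subspace of dimension $2^r[L_\xi:\Q]$ to $\mrm{H}^r_\mrm{sp}(X_B,\Q)$, not $2[L_\xi:\Q]$. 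Hence for $r\ge 2$ the Hecke-isotypic blocks of the full strongly primitive cohomology are too large to satisfy the real multiplication condition $2[L:\Q]=\dim_\Q H_\Q$, and your ``blockwise'' verification fails as stated. You gesture at the fix when you say the Frobenius translates ``pair up the holomorphic and antiholomorphic parts in the $\nu$-th direction,'' but you treat the operators $\{\mrm{Fr}_\mu\}_{\mu\neq\nu}$ only as extra endomorphisms rather than using them to cut the space down.

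The paper's proof first decomposes $H_\Q(X_B,\nu)=\bigoplus_\chi H_\Q(X_B,\nu)^\chi$ into eigenspaces for characters $\chi$ of the group $\prod_{\mu\neq\nu}\{\pm1\}$ generated by the involutions $\mrm{Fr}_\mu$, $\mu\neq\nu$ (this is a decomposition of Hodge structures precisely because those involutions preserve $F^{1_\nu}$). Each $H_\Q(X_B,\nu)^\chi$ has dimension $2\cdot\dim_\C H^{\underline{1},\underline{0}}$, so after further decomposing under $\bb{T}^{\mrm{good}}_\Q\cong\prod_\xi L_\xi$ each piece $H_\Q(X_B,\nu)^\chi_\xi$ has dimension exactly $2[L_\xi:\Q]$ and carries real multiplication by $L_\xi$; Corollary \ref{ALGforHSwithRM} then applies. (A smaller divergence: you derive total reality of the Hecke fields from self-adjointness for the Petersson product, whereas the paper invokes the Jacquet--Langlands correspondence together with the observation that $\bb{T}^{\mrm{good}}_\Q$ is determined by its action on $H^{\underline{1},\underline{0}}$; your route is plausible but the character decomposition is the step you cannot skip.)
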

\begin{proof}
There is a decomposition of rational Hodge structures 
\[
H_\Q(X_B,\nu)=\bigoplus_{\chi}H_\Q(X_B,\nu)^\chi
\]
indexed by characters $\chi=\prod_{\mu\not=\nu}\chi_\mu\colon \prod_{\mu\not=\nu}\{\pm1\}\to\{\pm1\}$ such that $\mrm{Fr}_\mu$ acts on $H_\Q(X_B,\nu)^\chi$ as multiplication by $\chi_\mu(-1)$. Moreover, 
\[
\mrm{dim}_\bb{Q}\hspace{0.5mm} H_\Q(X_B,\nu)^\chi= 2\cdot \mrm{dim}_\C\hspace{0.5mm} H^{\underline{1},\underline{0}}.
\]
Let $\bb{T}_\bb{Q}^\mrm{good}$ denote the $\bb{Q}$-algebra generated by the good Hecke operators (for primes not dividing the level of the Eichler order $R$ and the discriminant of the quaternion algebra $B$) acting faithfully on $\mrm{H}^r_\mrm{sp}(X_B,\bb{Q})$ via endomorphisms of plectic Hodge structures (see Section \ref{Functoriality}). Using \eqref{explicitFrob} and the compatibility between the Hecke action and the action of the involutions $\{\mrm{Fr}_\nu\}_\nu$, we see that $\bb{T}_\bb{Q}^\mrm{good}$ is determined by its action on the space $H^{\underline{1},\underline{0}}$ of holomorphic differential forms. Therefore, the Jacquet-Langlands correspondence implies that $\bb{T}_\bb{Q}^\mrm{good}$ is isomorphic to a product $\prod_\xi L_\xi$ of totally real number fields and 
\[
H_\Q(X_B,\nu)^\chi_\xi:=H_\Q(X_B,\nu)^\chi\otimes_{\bb{T}_\bb{Q}^\mrm{good}}L_\xi
\]
is an effective rational Hodge structure of weight $1$ with real multiplication by $L_\xi$. The claim of the proposition then follows from Corollary \ref{ALGforHSwithRM}.
\end{proof}

\subsubsection{Plectic Oda conjecture.}
Let $E_{/F}$ be a modular elliptic curve corresponding to a quaternionic newform $f$ of some level $\Gamma$ for an indefinite quaternion algebra $B/F$. For every Archimedean place of $F$ $\nu\in\Sigma$ where $B/F$ is split, we consider the base change $E_{\nu}=E\times_{F,\nu}\C$, and we denote by $c_\nu$ the involution of $\mrm{H}^1(E_{\nu},\bb{Q})$ induced by action of complex conjugation on $E_{\nu}(\C)$. 
Let $\bb{T}_\bb{Q}$ denote the Hecke $\bb{Q}$-algebra generated by Hecke operators for primes not dividing the discriminant of $B/F$ acting faithfully on $\mrm{H}^r(X_B,\bb{Q})$ and denote by $\mrm{H}^r_\mrm{sp}(X_B,\bb{Q})_f$ the $f$-isotypic component of the strongly primitive cohomology. The following is a refinement of a classical conjecture of Oda (\cite{OdaGeneral}, Conjecture A).
\begin{conjecture}[{[Plectic Oda]}]\label{plecticODA}
    There is an isomorphism of rational $r$-plectic Hodge structures
    \[
    \mrm{H}^r_\mrm{sp}(X_B,\bb{Q})_f\cong \bigotimes_{\nu\in \Sigma}\mrm{H}^1(E_{\nu},\bb{Q})
    \]
    intertwining the action of $\mrm{Fr}_\nu$ with that of $c_\nu$ for every $\nu\in\Sigma$.
\end{conjecture}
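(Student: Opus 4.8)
The plan is to reduce the plectic enhancement to a single rational comparison of holomorphic parts---namely the classical Oda conjecture---together with a transparent bookkeeping of the commuting involutions, which the formalism of this section renders essentially automatic.

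First I would fix the automorphic dictionary. The component $\mrm{H}^r_\mrm{sp}(X_B,\bb{Q})_f$ is cut out by a system of Hecke eigenvalues which, by the Jacquet--Langlands correspondence and the modularity of $E$, coincides at all good primes with the system attached to $E$ and hence to each base change $E_\nu$. In particular both sides are $2^r$-dimensional over $\bb{Q}$: on the left this is visible from \eqref{explicitFrob}, since the holomorphic line $H^{\underline{1},\underline{0}}$ is one-dimensional in the $f$-part and the $2^r$ translates $\mrm{Fr}_\beta^*\big(H^{\underline{1},\underline{0}}\big)$ span the whole space; on the right it follows from $\mrm{dim}_\bb{Q}\mrm{H}^1(E_\nu,\bb{Q})=2$. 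The Hecke algebra then acts on both sides through the same quotient.

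Second, I would exploit \eqref{explicitFrob} to reduce the problem to the holomorphic parts. That identity expresses $\mrm{H}^r_\mrm{sp}(X_B,\C)$ in terms of its purely holomorphic line and the commuting involutions $\{\mrm{Fr}_\nu\}_{\nu\in\Sigma}$, the plectic component of type $(\alpha,\beta)$ being $\mrm{Fr}_\beta^*\big(H^{\underline{1},\underline{0}}\big)$, so that $\mrm{Fr}_\nu$ precisely exchanges the $\nu$-th pair of plectic indices while fixing the others. The right-hand side admits the parallel description: $\bigotimes_{\nu\in\Sigma}\mrm{H}^1(E_\nu,\C)$ has $(\alpha,\beta)$-component $\bigotimes_{\nu\in\Sigma}H^{\alpha_\nu,\beta_\nu}(E_\nu)$, and since $c_\nu$ is induced by an antiholomorphic involution it exchanges $H^{1,0}(E_\nu)$ with $H^{0,1}(E_\nu)$ while preserving the factors at $\mu\not=\nu$, i.e. $c_\nu$ flips the $\nu$-th plectic index exactly as $\mrm{Fr}_\nu$ does. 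It therefore suffices to produce a Hecke-equivariant isomorphism of the one-dimensional holomorphic spaces $H^{\underline{1},\underline{0}}\cong\bigotimes_{\nu\in\Sigma}H^{1,0}(E_\nu)$; the intertwining of $\mrm{Fr}_\nu$ with $c_\nu$ and the compatibility with the full plectic grading then follow by propagating this isomorphism through the matching involutions, and rationality is imposed by descending along the $\bb{Q}$-structures on both sides.

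The remaining and decisive step is this comparison of holomorphic parts, which is exactly the content of Oda's conjecture: the $f$-part of the holomorphic differentials on $X_B$ must be identified, compatibly with periods, with the line $\bigotimes_{\nu\in\Sigma}H^{1,0}(E_\nu)$ arising from the modular parametrization. The hard part lies precisely here. Matching Hodge numbers and Hecke eigenvalues is formal and is supplied by Jacquet--Langlands together with the Eichler--Shimura--Harder description recalled via Matsushima--Shimura (\cite{Matsu-Shimu}); what is not formal is that the resulting comparison be defined over $\bb{Q}$ rather than merely over $\C$. Producing such a rational---ideally motivic---isomorphism amounts to relating the periods of the quaternionic Shimura variety to those of $E$, which is the unresolved core of Oda's conjecture (\cite{OdaGeneral}) and lies beyond what the automorphic input alone can furnish. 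This is why the statement is phrased as a conjecture: granting Oda's period comparison, the plectic refinement and the $\mrm{Fr}_\nu\leftrightarrow c_\nu$ intertwining drop out of the structural identity \eqref{explicitFrob}.
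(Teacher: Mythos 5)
This statement is a \emph{conjecture} in the paper: it is explicitly introduced as ``a refinement of a classical conjecture of Oda (\cite{OdaGeneral}, Conjecture A)'' and the paper offers no proof of it, so there is no argument of the author's to compare yours against. To your credit, you recognize this and do not pretend to close the gap: your write-up is a reduction of the plectic statement to the classical Oda conjecture together with an analysis of the involutions, and you correctly locate the unresolved core in the rational (period) comparison of the holomorphic parts. That part of your discussion is consistent with how the paper sets things up, in particular with the identity \eqref{explicitFrob} expressing $\mrm{H}^r_\mrm{sp}(X_B,\C)$ as the span of the $\mrm{Fr}_\beta^*$-translates of $H^{\underline{1},\underline{0}}$ and with the dimension count $2^r$ on both sides.

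One caution: your closing claim that ``granting Oda's period comparison, the plectic refinement and the $\mrm{Fr}_\nu\leftrightarrow c_\nu$ intertwining drop out'' overstates what the reduction gives. Classical Oda asserts an isomorphism of rational Hodge structures; the plectic version demands in addition that the isomorphism respect the finer $(\alpha,\beta)$-grading and intertwine each $\mrm{Fr}_\nu$ with $c_\nu$. Your propagation argument shows that a map defined on $H^{\underline{1},\underline{0}}$ and extended equivariantly through the involutions will automatically be compatible with the plectic grading over $\C$, but it does not show that such an equivariantly extended map coincides with (or can be chosen to be) a $\bb{Q}$-rational isomorphism; an abstract rational isomorphism of Hodge structures supplied by classical Oda need not intertwine the involutions at all. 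This is precisely why the paper presents the plectic statement as a strictly stronger \emph{refinement} rather than a corollary of Conjecture A. So the honest conclusion is not ``plectic Oda follows from Oda'' but ``plectic Oda implies Oda and asks for more,'' and your proposal should be read as an explication of the conjecture's content rather than a proof of any part of it.
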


\noindent We note that Conjecture \ref{plecticODA} together with the computation in Section \ref{ExTori} imply the existence of a morphism of abelian varieties 
\begin{equation}
\varphi_\nu\colon \mrm{J}_\plectic(X_B,\nu)\too E_\nu(\C)\otimes_\Z\bigotimes_{\mu\in\Sigma\setminus\{\nu\}}\mrm{H}^1(E_{\mu},\bb{Z})
\end{equation}
which should be thought of as a generalization of the  parametrization of elliptic curves by Jacobians of Shimura curves.
Moreover, it suggests the following conjecture which aims at elucidating the relation between the various plectic Jacobians attached to $X_B$. 
 \begin{conjecture}\label{plecticPARAM}
 There exists an abelian variety $\mrm{J}_{\plectic}(X_B)$ defined over $F$ and endowed with an $F$-rational Hecke action inducing a morphism  $\varphi\colon \mrm{J}_{\plectic}(X_B)\to E^{2^{\lvert\Sigma\rvert-1}}$ such that the analytification of $\varphi\otimes_{F,\nu}\C$ is canonically isomorphic to $\varphi_\nu$ for every $\nu\in\Sigma$.
 \end{conjecture}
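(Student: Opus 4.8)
The plan is to deduce Conjecture \ref{plecticPARAM} from the combination of Conjecture \ref{plecticODA} and Conjecture \ref{ALGplecticJAC}, so that the only genuinely new input becomes the descent of the analytically defined morphisms to the base field $F$. Granting the plectic Oda isomorphism $\mrm{H}^r_\mrm{sp}(X_B,\bb{Q})_f\cong\bigotimes_{\nu\in\Sigma}\mrm{H}^1(E_\nu,\bb{Q})$ of Conjecture \ref{plecticODA}, I would, for each fixed $\nu\in\Sigma$, apply the functor $F^{1_\nu}$ of \eqref{functorTOclassic} together with the explicit description of plectic Jacobians of products recorded in Section \ref{ExTori}. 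Because the isomorphism intertwines $\mrm{Fr}_\nu$ with $c_\nu$, it is compatible with the weight-one Hodge structures $H_\Z(X_B,\nu)$, and one recovers the analytic morphism
\[
\varphi_\nu\colon\mrm{J}_\plectic(X_B,\nu)\too E_\nu(\C)\otimes_\Z\bigotimes_{\mu\in\Sigma\setminus\{\nu\}}\mrm{H}^1(E_\mu,\Z)
\]
already noted after Conjecture \ref{plecticODA}. Using the self-duality $E_\nu^\vee\cong E_\nu$ and counting ranks ($r-1$ factors of rank $2$), the target is identified, up to isogeny, with $E_\nu^{2^{\lvert\Sigma\rvert-1}}$.

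The next step is to supply an algebraic model over $F$. Here I would invoke Conjecture \ref{ALGplecticJAC} to obtain an abelian variety $\mrm{J}_\plectic(X_B)$ defined over $F$ together with canonical isomorphisms $\big(\mrm{J}_\plectic(X_B)\otimes_{F,\nu}\C\big)^\mrm{an}\cong\mrm{J}_\plectic(X_B,\nu)$ for every $\nu\in\Sigma$. The good Hecke algebra $\bb{T}_\bb{Q}^\mrm{good}$ acts on $\mrm{H}^r_\mrm{sp}(X_B,\bb{Q})$ through morphisms of plectic Hodge structures (Section \ref{Functoriality}), and I would argue that this action descends to an $F$-rational action on $\mrm{J}_\plectic(X_B)$, using that the Hecke correspondences on $X_B$ are defined over its canonical model. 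Taking the target $E^{2^{\lvert\Sigma\rvert-1}}$ over $F$, its base change along $\nu$ returns $E_\nu^{2^{\lvert\Sigma\rvert-1}}$, matching the codomain of $\varphi_\nu$ up to isogeny.

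It then remains to assemble the family $\{\varphi_\nu\}_{\nu\in\Sigma}$ into a single $F$-morphism $\varphi\colon\mrm{J}_\plectic(X_B)\to E^{2^{\lvert\Sigma\rvert-1}}$ whose complex realization along each $\nu$ is $\varphi_\nu$. Since analytification is fully faithful on the relevant algebraic category (as recalled in the remark above), an $F$-morphism is uniquely pinned down by its realizations at the various $\nu$, so the content is purely one of \emph{existence}, i.e.\ a Galois-descent statement. The plan is to observe that $\Gal(\overline{F}/F)$ permutes the embeddings $\nu\in\Sigma$ and, through the canonical model over the reflex field, permutes the morphisms $\varphi_\nu$ compatibly; the family $\{\varphi_\nu\}$ is therefore Galois-stable and the effectivity of descent yields the desired $\varphi$ over $F$.

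The hard part will be the algebraicity and descent to $F$ encoded in Conjecture \ref{ALGplecticJAC}, which is established only for $r=1$ and is wide open for $r\ge2$. The plectic Jacobians $\mrm{J}_\plectic(X_B,\nu)$ are manufactured purely Hodge-theoretically out of a \emph{single} Archimedean place, so a priori they carry no algebraic structure relating distinct places; producing such a structure---equivalently, a canonical $F$-model whose $\nu$-adic analytifications are the analytic plectic Jacobians---is the crux of the matter. Consequently I do not expect an unconditional argument beyond the Shimura-curve case, and any proof of Conjecture \ref{plecticPARAM} must rest on first resolving the algebraicity of the plectic Jacobian itself.
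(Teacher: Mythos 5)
The statement you are addressing is a \emph{conjecture}: the paper offers no proof of Conjecture \ref{plecticPARAM}, and indeed the closely related Conjecture \ref{ALGplecticJAC} is explicitly described as wide open for all $r\ge 2$. So there is no argument in the paper to compare yours against, and what you have written is not a proof but a conditional reduction to two other open conjectures. To your credit, you say so plainly in your final paragraph, and the first part of your outline (applying $F^{1_\nu}$ to the plectic Oda isomorphism, using the computation of Section \ref{ExTori} to identify the analytic target with $E_\nu(\C)\otimes_\Z\bigotimes_{\mu\ne\nu}\mrm{H}^1(E_\mu,\Z)\cong E_\nu(\C)^{2^{\lvert\Sigma\rvert-1}}$) is exactly the heuristic the paper itself records immediately before stating the conjecture; note that no isogeny or self-duality of $E_\nu$ is needed there, since tensoring with a free $\Z$-module of rank $2^{r-1}$ literally yields $E_\nu^{2^{r-1}}$.

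The genuine gap is in your descent step, and it is not a technicality. A morphism of abelian varieties over $F$ is already determined by its base change along a \emph{single} embedding $\nu$ (Hom-groups of abelian varieties inject under field extension), so one cannot independently prescribe the realizations $\varphi_{\nu_1},\dots,\varphi_{\nu_r}$ at all the places and then invoke effectivity of Galois descent to produce $\varphi$: the simultaneous compatibility of a single $F$-morphism with \emph{all} the analytically defined $\varphi_\nu$ is precisely the nontrivial content of Conjecture \ref{plecticPARAM}, not something that follows formally from the existence of an $F$-model. Moreover, the $\varphi_\nu$ are constructed purely Hodge-theoretically at each Archimedean place, so there is no a priori action of $\Gal(\overline{F}/F)$ permuting them ``compatibly''; asserting such an action presupposes the algebraicity one is trying to prove. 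In short, your proposal correctly identifies the shape of the expected statement and its dependence on Conjectures \ref{plecticODA} and \ref{ALGplecticJAC}, but it does not (and, given the current state of knowledge, cannot) constitute a proof.
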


\section{Plectic Abel--Jacobi maps}
Recall that $X_B=\Gamma\backslash\cal{H}_\Sigma$ is a compact quaternionic Shimura variety. The group $\Gamma$ is assumed to be torsion-free and it acts on $\cal{H}_\nu$ through its image $\nu(\Gamma)\le\mrm{PGL}_2(\R)$.
\begin{definition}
For any $\nu\in\Sigma$ we denote by $\cal{H}_\nu^\circ\subseteq \cal{H}_\nu$ the subset of those points with trivial stabilizer in $\Gamma$. We set $\cal{H}^{\circ}_\Sigma:=\prod_{\nu\in\Sigma}\cal{H}_\nu^\circ$ and $X^\circ_B:=\Gamma\backslash \cal{H}^{\circ}_\Sigma$, 
\end{definition}
\begin{remark}
Let $X^\mrm{CM}_B\subseteq X_B$ denote the subset of CM points, then $X^\mrm{CM}_B\subseteq X^\circ_B$.
\end{remark}

\noindent Let $M$ be a $\Gamma$-module. The subset $\cal{H}_\nu^\circ$ has been singled out because the higher homology groups of the tensor product $\mrm{Div}(\cal{H}_\nu^\circ)\otimes_\Z M$ with diagonal $\Gamma$-action vanish, i.e.,
\begin{equation}\label{keyvanishing}
\mrm{H}_k\big(\Gamma,\hspace{1mm} \mrm{Div}(\cal{H}_\nu^\circ)\otimes_\Z M \big)=0\qquad\forall\ k\ge1.
\end{equation}
Indeed, by definition there an isomorphism of $\Gamma$-modules
\[
\mrm{Div}(\cal{H}_\nu^\circ)\cong\bigoplus_{x\in\Gamma\backslash\cal{H}_\nu^\circ}\Z[\hspace{0.2mm}\Gamma\hspace{0.2mm}],
\]
and $\Gamma$-module $Z[\Gamma]\otimes_\Z M$ with diagonal $\Gamma$-action is isomorphic to the induced $\Gamma$-module $\mrm{Ind}^\Gamma_{\{1\}}(M)$.

\begin{definition}
Let $S\subseteq\Sigma$  be a subset with complement denoted by $S^c$. We define 
\[
\Z_S\big[\cal{H}^{\circ}_\Sigma\big]:=\bigotimes_{\nu\in S}\mrm{Div}(\cal{H}_\nu^\circ)\otimes\bigotimes_{\nu\in S^c}\mrm{Div}^0(\cal{H}_\nu^\circ).
\]
where $\mrm{Div}^0(\cal{H}_\nu^\circ)$ denotes the group of divisors of degree zero. 
\end{definition}
\begin{proposition}\label{kernel}
There is a short exact sequence
\[\xymatrix{
0\ar[r]&\mrm{H}_{\lvert\Sigma\rvert}\big(\Gamma,\Z\big)\ar[r]&\mrm{H}_0\big(\Gamma, \Z_\emptyset\big[\cal{H}^{\circ}_\Sigma\big]\big)\ar[r]& \Z\big[X^\circ_B\big].
}\]
\end{proposition}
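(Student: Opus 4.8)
The plan is to read the second arrow correctly and then run an iterated long exact sequence argument anchored on the vanishing \eqref{keyvanishing}. Writing $D_\nu:=\mrm{Div}(\mathcal{H}_\nu^\circ)$ and $D_\nu^0:=\mrm{Div}^0(\mathcal{H}_\nu^\circ)$, the target is $\Z[X_B^\circ]=(\bigotimes_\nu D_\nu)_\Gamma=\mrm{H}_0(\Gamma,\bigotimes_\nu D_\nu)$ under the canonical identification $\bigotimes_\nu D_\nu\cong\Z[\mathcal{H}_\Sigma^\circ]$, and the map in the statement is the one induced on $\Gamma$-coinvariants by the inclusion $\bigotimes_\nu D_\nu^0\hookrightarrow\bigotimes_\nu D_\nu$. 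So the whole content is to identify the kernel of $(\bigotimes_\nu D_\nu^0)_\Gamma\to(\bigotimes_\nu D_\nu)_\Gamma$ with $\mrm{H}_{\lvert\Sigma\rvert}(\Gamma,\Z)$; I would stress that surjectivity is never claimed, since the image of this arrow is precisely the group of plectic zero-cycles.

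The basic tool I would isolate first is a dimension-shifting statement. For every $\nu$ the degree map gives a short exact sequence of $\Gamma$-modules $0\to D_\nu^0\to D_\nu\to\Z\to0$ with $\Z$ trivial, and since all three terms are free over $\Z$, tensoring over $\Z$ with any $\Gamma$-module $M$ (diagonal action) keeps it exact: $0\to D_\nu^0\otimes M\to D_\nu\otimes M\to M\to0$. By \eqref{keyvanishing} the middle term $D_\nu\otimes M=\mrm{Div}(\mathcal{H}_\nu^\circ)\otimes M$ is $\Gamma$-acyclic, so the long exact homology sequence collapses to isomorphisms $\mrm{H}_{k+1}(\Gamma,M)\cong\mrm{H}_k(\Gamma,D_\nu^0\otimes M)$ for all $k\ge1$, together with the four-term left-exact sequence $0\to\mrm{H}_1(\Gamma,M)\to\mrm{H}_0(\Gamma,D_\nu^0\otimes M)\to\mrm{H}_0(\Gamma,D_\nu\otimes M)\to\mrm{H}_0(\Gamma,M)\to0$ in degree $0$.

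Next I would fix an ordering $\Sigma=\{\nu_1,\dots,\nu_r\}$ and interpolate between the two tensor products by the filtration $A_k:=\bigotimes_{i\le k}D_{\nu_i}\otimes\bigotimes_{i>k}D_{\nu_i}^0$, so that $A_0=\bigotimes_\nu D_\nu^0$, $A_r=\bigotimes_\nu D_\nu$, and $A_{k-1}\hookrightarrow A_k$ is the inclusion in the $k$-th slot. Its cokernel $Q_k\cong\bigotimes_{i<k}D_{\nu_i}\otimes\bigotimes_{i>k}D_{\nu_i}^0$ contains the factor $D_{\nu_1}=\mrm{Div}(\mathcal{H}_{\nu_1}^\circ)$ whenever $k\ge2$, hence is $\Gamma$-acyclic by \eqref{keyvanishing}; the long exact sequence of $0\to A_{k-1}\to A_k\to Q_k\to0$ then forces $(A_{k-1})_\Gamma\hookrightarrow(A_k)_\Gamma$ for each $k\ge2$. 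Thus $(A_1)_\Gamma\hookrightarrow(A_r)_\Gamma$ is injective and the kernel of the composite $(A_0)_\Gamma\to(A_r)_\Gamma$ equals the kernel of $(A_0)_\Gamma\to(A_1)_\Gamma$. For the latter I use the same long exact sequence with $k=1$: here $A_1=D_{\nu_1}\otimes(\bigotimes_{i\ge2}D_{\nu_i}^0)$ is itself acyclic, so $\mrm{H}_1(\Gamma,A_1)=0$ and the connecting map identifies $\ker[(A_0)_\Gamma\to(A_1)_\Gamma]$ with $\mrm{H}_1(\Gamma,\bigotimes_{i\ge2}D_{\nu_i}^0)$.

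Finally I would feed this into the dimension-shift $r-1$ times, peeling one $D^0$-factor at a time, to obtain $\mrm{H}_1(\Gamma,\bigotimes_{i\ge2}D_{\nu_i}^0)\cong\mrm{H}_2(\Gamma,\bigotimes_{i\ge3}D_{\nu_i}^0)\cong\cdots\cong\mrm{H}_r(\Gamma,\Z)=\mrm{H}_{\lvert\Sigma\rvert}(\Gamma,\Z)$, which is the desired identification; the case $r=1$ is the degenerate base, where the chain is empty and the four-term sequence of the dimension-shift lemma is literally the claimed exact sequence. The genuine mathematical input throughout is the acyclicity \eqref{keyvanishing}, so the only real obstacle is organizational: one must check that, in the iterated tensor product, every module other than the all-$D^0$ one carries a surviving $\mrm{Div}$-factor and hence contributes nothing to higher homology, leaving a single surviving group. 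I expect this bookkeeping to be the delicate point, and I note that it could alternatively be packaged as the hyperhomology spectral sequence of the total tensor complex of the two-term complexes $[D_\nu^0\hookrightarrow D_\nu]$; the filtration-plus-long-exact-sequence route, however, avoids any spectral-sequence convergence discussion.
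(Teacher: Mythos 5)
Your proposal is correct and follows essentially the same route as the paper: the interpolating filtration $A_k$ is exactly the paper's family $\Z_S\big[\cal{H}^{\circ}_\Sigma\big]$ for $S=\{\nu_1,\dots,\nu_k\}$, the injectivity of $(A_{k-1})_\Gamma\to(A_k)_\Gamma$ for $k\ge 2$ and the identification of $\ker\big[(A_0)_\Gamma\to(A_1)_\Gamma\big]$ with $\mrm{H}_1\big(\Gamma,\bigotimes_{i\ge2}\mrm{Div}^0(\cal{H}_{\nu_i}^\circ)\big)$ both rest on the acyclicity \eqref{keyvanishing} exactly as in the paper, and your final dimension-shift is the paper's \eqref{connISO} iterated $r-1$ times. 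Your explicit remarks that freeness over $\Z$ preserves exactness after tensoring and that the $r=1$ case is the degenerate base are welcome clarifications but do not change the argument.
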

\begin{proof}
Note that the free group $\Z\big[\cal{H}^{\circ}_\Sigma\big]$ equals $\Z_{\Sigma}\big[\cal{H}^{\circ}_\Sigma\big]$ and that $\Z\big[X^\circ_B\big]=\mrm{H}_0\big(\Gamma, \Z\big[\cal{H}^{\circ}_\Sigma\big]\big)$. 
Then, to prove the proposition it suffices to show that for any non-empty $S\subseteq\Sigma$ and any $\nu\in S$
\[
\ker\Big(\mrm{H}_0\big(\Gamma,\hspace{1mm} \Z_{S\setminus\{\nu\}}\big[\cal{H}^{\circ}_\Sigma\big]\big)\too \mrm{H}_0\big(\Gamma,\hspace{1mm} \Z_{S}\big[\cal{H}^{\circ}_\Sigma\big]\big)\Big)=\begin{cases}
\mrm{H}_r(\Gamma,\Z)&\text{if}\ S=\{\nu\},\\
0&\text{otherwise}.
\end{cases}
\]
To see that the claim holds, consider the short exact sequence of $\Gamma$-modules
\[\xymatrix{
0\ar[r]&\Z_{S\setminus\{\nu\}}\big[\cal{H}^{\circ}_\Sigma\big]\ar[r]& \Z_{S}\big[\cal{H}^{\circ}_\Sigma\big]\ar[rr]^-{\deg_\nu\otimes1}&& \bigotimes_{\mu\in S\setminus\{\nu\}}\mrm{Div}(\cal{H}_\mu^\circ)\otimes\bigotimes_{\mu\in S^c}\mrm{Div}^0(\cal{H}_\mu^\circ)\ar[r]&0.
}\]
If $\lvert S\rvert >1$, then we are done thanks to the observation \eqref{keyvanishing}. If $S=\{\nu\}$ we are left to prove that 
\begin{equation}\label{TOprove}
\mrm{H}_1\Big(\Gamma,\hspace{1mm}\bigotimes_{\mu\not=\nu}\mrm{Div}(\cal{H}_\mu^\circ)\Big)\cong \mrm{H}_r(\Gamma,\Z).
\end{equation}
For this, let $S\subseteq\Sigma$ be arbitrary, $\mu\not\in S$, and consider the short exact sequence of $\Gamma$-modules
\[\xymatrix{
0\ar[r]&\bigotimes_{\nu\in S\cup\{\mu\}}\mrm{Div}^0(\cal{H}_\nu^\circ)\ar[r]& \mrm{Div}(\cal{H}_\mu^\circ)\otimes\bigotimes_{\nu\in S}\mrm{Div}^0(\cal{H}_\nu^\circ)\ar[rr]^-{\deg_\mu\otimes1}&& \bigotimes_{\nu\in S}\mrm{Div}^0(\cal{H}_\nu^\circ)\ar[r] &0.
}\]
Once more, observation \eqref{keyvanishing} shows that taking homology we obtain the connecting isomorphisms
\begin{equation}\label{connISO}
\mrm{H}_{m+1}\Big(\Gamma,\hspace{1mm}\bigotimes_{\nu\in S}\mrm{Div}^0(\cal{H}_\nu^\circ)\Big)\cong \mrm{H}_{m}\Big(\Gamma,\bigotimes_{\nu\in S\cup\{\mu\}}\mrm{Div}^0(\cal{H}_\nu^\circ)\Big)\qquad\forall\ m\ge1.
\end{equation}
Thus, the isomorphism in \eqref{TOprove} follows by repeatedly applying \eqref{connISO}.
\end{proof}

\noindent  Using the cup product in de Rham cohomology we make the following identification
\[
\mrm{J}_\plectic(X_B,\nu)\cong \big(F^{1_\nu}\mrm{H}_\mrm{sp}^r(X_B,\C)\big)^\vee\big/\mrm{H}_r(X_B,\Z),
\]
and consider the homomorphism 
\begin{equation}\label{multiintegral}
\int^\Sigma\colon\mrm{H}_0\big(\Gamma, \Z_\emptyset\big[\cal{H}^{\circ}_\Sigma\big]\big)\too\mrm{J}_\plectic(X_B,\nu),\qquad \otimes_{j=1}^r(x_{\nu_j}-y_{\nu_j})\mapsto \left[\int_{y_{\nu_1}}^{x_{\nu_1}}\cdots \int_{y_{\nu_r}}^{x_{\nu_r}}(-)\right].
\end{equation}
It will be convenient for the proof of the next lemma to compute the singular homology of $X_B$ with the chain complex $(C_{\bfcdot}^\infty(X_B),\partial_{\bfcdot})$ of smooth cubical chains on $X_B$ (\cite{Massey}). An element $c\in C_n^\infty(X_B)$ is a non-degenerate smooth function $c:[0,1]^n\to X_B$ and the differential $\partial_n\colon C_n^\infty(X_B)\to C_{n-1}^\infty(X_B)$ is given by the formula
\begin{equation}\label{boundary}
\partial_n(c)=\sum_{k=1}^n(-1)^k[A_k(c)-B_k(c)] 
\end{equation}
where $A_k(c)(x_1,\dots,x_{n-1})=c(x_1,\dots,x_{k-1},0,x_{k+1},\dots,x_{n-1})$ can be thought of as the $k$-th ``front'' face of the cubical $n$-chain $c$,
and $B_k(c)(x_1,\dots,x_{n-1})=c(x_1,\dots,x_{k-1},1,x_{k+1},\dots,x_{n-1})$ as the $k$-th ``back'' face.

\begin{theorem}\label{compatiblehomology}
We have
\[
\mrm{H}_{r}(\Gamma,\Z)\subseteq \ker\int^\Sigma.
\]
\end{theorem}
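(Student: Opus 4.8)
The plan is to prove that $\int^\Sigma$ carries the subgroup $\mrm{H}_r(\Gamma,\Z)$, sitting inside $\mrm{H}_0(\Gamma,\Z_\emptyset[\cal{H}^\circ_\Sigma])$ via the injection $\iota$ of the short exact sequence in Proposition~\ref{kernel}, into the period lattice of the target complex torus, so that its image in $\mrm{J}_\plectic(X_B,\nu)$ vanishes. Since $\cal{H}_\Sigma$ is contractible and $\Gamma$ acts freely and properly, $X_B$ is a $K(\Gamma,1)$ and $\mrm{H}_r(\Gamma,\Z)\cong\mrm{H}_r(X_B,\Z)$. Under the identification $\mrm{J}_\plectic(X_B,\nu)\cong\big(F^{1_\nu}\mrm{H}^r_\mrm{sp}(X_B,\C)\big)^\vee/\mrm{H}_r(X_B,\Z)$ recorded before the statement, the lattice $\mrm{H}_r(X_B,\Z)$ embeds by the period pairing $\gamma\mapsto\big(\omega\mapsto\int_\gamma\omega\big)$. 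Hence it suffices to show that the composite $\mrm{H}_r(\Gamma,\Z)\xrightarrow{\iota}\mrm{H}_0(\Gamma,\Z_\emptyset[\cal{H}^\circ_\Sigma])\xrightarrow{\int^\Sigma}\big(F^{1_\nu}\mrm{H}^r_\mrm{sp}\big)^\vee$ agrees with this period pairing; then every value is a lattice point, hence $0$ in the quotient.

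To compute the composite I would use the smooth cubical chain complex $(C_\bullet^\infty(X_B),\partial_\bullet)$ introduced before the statement, whose $r$-cubes $[0,1]^r\to X_B$ match the $r$-fold ``box'' underlying the iterated integral in \eqref{multiintegral}. Tracing $\iota$ through the connecting isomorphisms \eqref{connISO} that prove Proposition~\ref{kernel}, a class $c\in\mrm{H}_r(\Gamma,\Z)$ is represented by a $\Gamma$-coinvariant sum of elementary tensors $\sum_i\otimes_{j=1}^r\big(x_{ij}-y_{ij}\big)$ in $\otimes_j\mrm{Div}^0(\cal{H}^\circ_{\nu_j})$. For each pair $i,j$ I choose a smooth path $\gamma_{ij}\colon[0,1]\to\cal{H}^\circ_{\nu_j}$ from $y_{ij}$ to $x_{ij}$ --- unique up to homotopy because $\cal{H}_{\nu_j}$ is contractible --- and form the product cube $\square_i=(\gamma_{i1},\dots,\gamma_{ir})\colon[0,1]^r\to\cal{H}^\circ_\Sigma$, whose projection $c_i=\pi\circ\square_i$ is a smooth cubical $r$-chain on $X_B$. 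For any $\Gamma$-invariant closed $r$-form $\omega$ representing a class in $F^{1_\nu}\mrm{H}^r_\mrm{sp}$, the product structure of $\cal{H}_\Sigma$ lets Fubini factor the integral over the cube, giving $\int_{\square_i}\omega=\int_{y_{i1}}^{x_{i1}}\cdots\int_{y_{ir}}^{x_{ir}}\omega$, which is exactly the corresponding summand of $\int^\Sigma$. Summing over $i$ therefore identifies $\int^\Sigma(\iota(c))$ with $\int_Z\omega$, where $Z=\sum_i c_i$.

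It remains to verify that $Z$ is a cycle whose class is $c$, so that $\int_Z\omega$ is genuinely the period $\langle c,\omega\rangle$. This is where the boundary formula \eqref{boundary} meets the homological bookkeeping: the face operators $A_k,B_k$ applied to the product cube $\square_i$ freeze its $k$-th coordinate at $y_{ik}$ or $x_{ik}$ and return the $(r-1)$-cubes in the remaining directions, while each connecting isomorphism in \eqref{connISO} is built by lifting through a surjection $\deg_\mu\otimes1$ and applying the group boundary. I would check, by induction on the number of directions already assembled, that this lifting corresponds precisely to adjoining one more path-direction to the cube, and that the degree-zero conditions encoded in the $\mrm{Div}^0$ factors force the frozen faces to cancel modulo $\Gamma$, yielding $\partial Z=0$ and $[Z]=c$.

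The step I expect to be the main obstacle is exactly this chain-level dictionary: matching the purely homological description of $\iota$ --- an iterated connecting isomorphism assembled from the degree maps and the vanishing \eqref{keyvanishing} --- with the geometric operation of building an $r$-cube one direction at a time, and confirming that the face cancellations of \eqref{boundary} are governed by the degree-zero conditions. Once that dictionary is in place, the Fubini computation and the identification of periods with the lattice make the inclusion $\mrm{H}_r(\Gamma,\Z)\subseteq\ker\int^\Sigma$ immediate.
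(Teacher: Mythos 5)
Your proposal is correct and follows essentially the same route as the paper: both use the smooth cubical chain complex, realize each elementary tensor as a product $r$-cube whose iterated integral computes $\int^\Sigma$, observe that the cube's faces are governed by the degree maps, and conclude that the image of $\mrm{H}_r(\Gamma,\Z)$ lands in the period lattice $\mrm{H}_r(X_B,\Z)/\mrm{tor}$ and hence dies in the quotient torus. The only difference is organizational: the paper packages the step you flag as the main obstacle into a commutative diagram comparing $\zeta$ (whose kernel is $\mrm{H}_r(\Gamma,\Z)$ by Proposition \ref{kernel}) with $\partial_r$ on $C_r^\infty(X_B)/\ker\Upsilon$, which shows the image is \emph{some} lattice element without having to trace the class explicitly through the connecting isomorphisms or verify $[Z]=c$.
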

\begin{proof}
 Integration over smooth cubical $n$-chains gives a morphism $\Upsilon\colon C_r^\infty(X_B)\to \mrm{H}_\mrm{dR}^r(X_B/\C)^\vee$. Moreover, Stokes' theorem implies that $\partial_{r+1}C_{r+1}^\infty(X_B)\subseteq\ker\Upsilon$, and de Rham's theorem gives the following exact sequence
 \[\xymatrix{
 0\ar[r]& \mrm{H}_r(X_B,\Z)/\mrm{tor}\ar[r]& C_r^\infty(X_B)\big/\ker\Upsilon\ar[r]^-{\partial_r}&C_{r-1}^\infty(X_B)\big/\partial_r\big(\ker\Upsilon\big).
 }
 \]
 The homomorphism $\int^\Sigma\colon\mrm{H}_0\big(\Gamma, \Z_\emptyset\big[\cal{H}^{\circ}_\Sigma\big]\big)\too\mrm{J}_\plectic(X_B,\nu)$ naturally factors through
\[
\xi\colon\mrm{H}_0\big(\Gamma,\Z_\emptyset\big[\cal{H}_\Sigma^{\circ}\big]\big)\too C_{r}^\infty(X_B)\big/\ker\Upsilon,\qquad \otimes_{j=1}^r(x_{\nu_j}-y_{\nu_j})\mapsto \int_{y_{\nu_1}}^{x_{\nu_1}}\cdots \int_{y_{\nu_r}}^{x_{\nu_r}}(-).
\]
Motivated by the description of the boundary map $\partial_r$ given in \eqref{boundary}, we define
\[
\zeta\colon\mrm{H}_0\big(\Gamma,\Z_\emptyset\big[\cal{H}^{\circ}_\Sigma\big]\big)\too \bigoplus_{j=1}^r \mrm{H}_0\big(\Gamma,\Z_{\{\nu_j\}}\big[\cal{H}^{\circ}_\Sigma\big]\big),\qquad \Delta\mapsto \big( (-1)^j\Delta\big)_{j=1}^r.
\]
Note that $\ker\zeta=\mrm{H}_r(\Gamma,\Z)$ by Proposition \ref{kernel}.
Then, the claim follows because there exists a morphism $\varpi\colon \mrm{Im}(\zeta)\to \partial_r \big(C_{r}^\infty(X_B)\big/\ker\Upsilon\big)$  making the following diagram (with exact rows) commute
\[\xymatrix{
0\ar[r]&\mrm{H}_r(\Gamma,\Z)\ar@{.>}[d]\ar[r]&\mrm{H}_0\big(\Gamma,\Z_\emptyset\big[\cal{H}^{\circ}_\Sigma\big]\big)\ar[r]^-\zeta\ar[d]^{\xi}& \mrm{Im}(\zeta)\ar@{.>}[d]^\varpi\\
0\ar[r]&\mrm{H}_r(X_B,\Z)/\mrm{tor}\ar[r]&C_{r}^\infty(X_B)\big/\ker\Upsilon\ar[r]^-{\partial_r}&\partial_r \big(C_{r}^\infty(X_B)\big/\ker\Upsilon\big).
}\]
\end{proof}

\begin{definition}
Let $\Z_\plectic[X^\circ_B]:=\mrm{Im}\big(\Z_\emptyset\big[\cal{H}^{\circ}_\Sigma\big]\to\Z[X^\circ_B]\big)$ denote the group of plectic zero-cycles supported on $X^\circ$. For any $\nu\in\Sigma$ the $\nu$-th plectic Abel--Jacobi map is the homomorphism
\[
\mrm{AJ}^\nu_\plectic\colon \Z_\plectic[X^\circ_B]\too \mrm{J}_\plectic(X_B,\nu)
\]
obtained from \eqref{multiintegral} using Proposition \ref{kernel} and Theorem \ref{compatiblehomology}.
\end{definition}

\bibliography{Plectic}
\bibliographystyle{alpha}

\end{document}